\newcommand*{\un}{{\mathbf 1}}
\newcommand{\tr}{\triangleright}
\newcommand{\BCH}{\operatorname{BCH}}
\numberwithin{equation}{section}
\newtheorem{Theorem}{Theorem}[section]
\newtheorem{Corollary}[Theorem]{Corollary}
\newtheorem{Proposition}[Theorem]{Proposition}
 { \theoremstyle{definition}
\newtheorem{Definition}[Theorem]{Definition}

\newtheorem{Remark}[Theorem]{Remark} }
\begin{document}

\allowdisplaybreaks

\newcommand{\arXivNumber}{1505.02436}

\renewcommand{\PaperNumber}{093}

\FirstPageHeading

\ShortArticleName{Post-Lie Algebras and Isospectral Flows}

\ArticleName{Post-Lie Algebras and Isospectral Flows}

\Author{Kurusch {EBRAHIMI-FARD}~$^{\dag^1}$, Alexander {LUNDERVOLD}~$^{\dag^2}$, Igor {MENCATTINI}~$^{\dag^3}$\\ and Hans Z.~{MUNTHE-KAAS}~$^{\dag^4}$}

\AuthorNameForHeading{K.~Ebrahimi-Fard, A.~Lundervold, I.~Mencattini and H.Z.~Munthe-Kaas}

\Address{$^{\dag^1}$~ICMAT, C/ Nicol\'as Cabrera 13-15, 28049 Madrid, Spain} 
\EmailDD{\href{mailto:kurusch@icmat.es}{kurusch@icmat.es}}
\URLaddressDD{\url{http://www.icmat.es/kurusch/personal}}

\Address{$^{\dag^2}$~Department of Computing, Mathematics and Physics, Faculty of Engineering,\\
 \hphantom{$^{\dag^2}$}~Bergen University College, Postbox 7030, N-5020 Bergen, Norway}
\EmailDD{\href{mailto:alexander.lundervold@gmail.com}{alexander.lundervold@gmail.com}}
\URLaddressDD{\url{http://alexander.lundervold.com}}

\Address{$^{\dag^3}$~Instituto de Ci\^encias Matem\'aticas e de Computa\c{c}\~ao, Universidade de S\~ao Paulo,\\
 \hphantom{$^{\dag^3}$}~Campus de S\~ao Carlos, Caixa Postal 668, 13560-970 S\~ao Carlos, SP, Brazil}
\EmailDD{\href{mailto:igorre@icmc.usp.br}{igorre@icmc.usp.br}}

\Address{$^{\dag^4}$~Department of Mathematics, University of Bergen, Postbox 7803, N-5020 Bergen, Norway}
\EmailDD{\href{mailto:hans.munthe-kaas@math.uib.no}{hans.munthe-kaas@math.uib.no}}
\URLaddressDD{\url{http://hans.munthe-kaas.no}}

\ArticleDates{Received August 13, 2015, in f\/inal form November 16, 2015; Published online November 20, 2015}

\Abstract{In this paper we explore the Lie enveloping algebra of a post-Lie algebra derived from a classical $R$-matrix. An explicit exponential solution of the corresponding Lie bracket f\/low is presented. It is based on the solution of a post-Lie Magnus-type dif\/ferential equation.}

\Keywords{isospectral f\/low equation; $R$-matrix; Magnus expansion; post-Lie algebra}

\Classification{70H06; 17D99; 37J35}

\section{Introduction}

\emph{Isospectral flows} and the corresponding Lax type equations play an important role in the theo\-ry of dynamical systems, both in f\/inite and inf\/inite dimensions. See \cite{BaBeTa,Lax}. They appear together with a large supply of conserved quantities for the original dynamical system. In the f\/inite-dimensional case, i.e., for systems with a f\/inite number of degrees of freedom, the Lax representation may correspond to the Hamiltonian representation of the dynamical system in terms of Euler-type equations on the coadjoint orbits of a suitable Lie group $G$. Writing $\mathfrak g^*$ for the dual of the Lie algebra $\mathfrak g$ corresponding to $G$, recall that if $H \in C^{\infty}(\mathfrak g^*)$ is a Hamiltonian of the dynamical system then the corresponding Hamiltonian equations, written with respect to the canonical linear Poisson structure $\{\cdot,\cdot\}_{\mathfrak g}$, take the following form
\begin{gather}
\label{eq:uno}
	\dot{\alpha} = -\operatorname{ad}^{\sharp}_{dH_{\alpha}}(\alpha),
\end{gather}
where $\alpha\in\mathfrak g^\ast$.
In this description the Casimir functions provide only trivial f\/irst integrals with respect to the bracket $\{\cdot,\cdot\}_{\mathfrak g}$. The existence of a map $R \in \operatorname{End}(\mathfrak g)$, which satisf\/ies the so-called modif\/ied classical Yang--Baxter equation \cite{STS}
\begin{gather}
\label{eq:mcybe}
	[R(x),R(y)] - R\big([R(x),y]+[x,R(y)]\big) = -[x,y],
\end{gather}
usually called an $R$-matrix, yields a \emph{new} Lie bracket on $\mathfrak g$
\begin{gather}
\label{eq:double}
	[x,y]_R := \frac{1}{2}\big([R(x),y] + [x,R(y)]\big).
\end{gather}
This Lie bracket def\/ines the double Lie algebra $\bar{\mathfrak{g}}$, and the corresponding linear Poisson structu\-re~$\{\cdot,\cdot\}_R$ on~$\mathfrak g^*$ associates to each Casimir function with respect to $\{\cdot,\cdot\}_{\mathfrak g}$, a non-trivial f\/irst integral of the original dynamical system. In this augmented setting the following holds:
\begin{enumerate}\itemsep=0pt
\item[(i)] The Casimir functions corresponding to the \emph{original} Poisson structure $\{\cdot,\cdot\}_{\mathfrak g}$ Poisson commute with respect to the new Poisson structure $\{\cdot,\cdot\}_R$.

\item[(ii)] For every Casimir function $H$, equation \eqref{eq:uno} written with respect to the Poisson struc\-tu\-re~$\{\cdot,\cdot\}_R$ assumes the form
\begin{gather}
\label{eq:quattro}
	\dot{\alpha} = -\frac{1}{2}\operatorname{ad}^{\sharp}_{RdH_{\alpha}}(\alpha),\qquad
	\forall \, \alpha \in \mathfrak g^{\ast}.
\end{gather}
Furthermore, under the assumption that $\mathfrak g$ is endowed with a non-degenerate, $\mathfrak g$-invariant bilinear form $(\cdot \vert \cdot)$, the previous equation can be written as the following Lie bracket f\/low equation
\begin{gather}
\label{eq:cinque}
	\dot{x}_{\alpha} = \frac{1}{2}[x_{\alpha},RdH_{\alpha}],
\end{gather}
where $x_{\alpha} \in \mathfrak g$ is the (unique) element such that $(x_{\alpha} \vert y) = \langle \alpha,y \rangle$, for all $y \in\mathfrak g$.
\end{enumerate}

In many interesting cases \cite{BaBeTa,ChuNorris,Faybusovich,Suris,Watkins}, the existence of such an $R$-matrix turns out to be equivalent to a decomposition of the Lie algebra $\mathfrak g = \mathfrak g_+ \oplus \mathfrak g_-$, where $\mathfrak g_\pm$ are two Lie subalgebras of $\mathfrak g$. To any such decomposition corresponds a (local) decomposition of the corresponding Lie group $G \simeq G_+ \times G_-$, where $\simeq$ in this case means a local dif\/feomorphism from a neighborhood of the identity $e \in G$ to a neighborhood of the identity $(e,e)$ of the (product) Lie group $G_+ \times G_-$. Regarding equation~\eqref{eq:quattro}, or equivalently equation~\eqref{eq:cinque}, the following factorization theorem holds true. See the above references for details and background.

\begin{Theorem} [\cite{STS}] \label{thm:factorization}
Let $\alpha_0 \in \mathfrak g^{\ast}$, let $H$ be a Casimir function of $(\mathfrak g^{\ast}, \{\cdot,\cdot\}_{\mathfrak g})$. Let $g_{\pm}(t)$ be two smooth curves in $G$, such that:
\begin{enumerate}\itemsep=0pt
\item[$(a)$] $g_\pm(t)\in G_\pm$ for all $t$ for which they are defined,
\item[$(b)$] $g_{\pm}(0)$ are both equal to the identity $e\in G$,
\item[$(c)$] they give a unique solution of the following factorization problem
\begin{gather}
\label{thm1:fact}
	\exp (tdH_{\alpha_0})=g_+(t)g_-(t),
\end{gather}
at least for $\vert t \vert < \epsilon$, with $\epsilon > 0$.
\end{enumerate}
Then the curve $t \rightsquigarrow \alpha(t)$, $\alpha(0)=\alpha_0$, given by
\begin{gather*}
	\alpha(t)= \operatorname{Ad}^{\sharp}_{g^{-1}_{+}(t)}\alpha_0
		     = \operatorname{Ad}^{\sharp}_{g_{-}(t)}\alpha_0
\end{gather*}
is a solution of equation~\eqref{eq:quattro}.
\end{Theorem}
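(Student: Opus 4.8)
This is the classical Semenov--Tian--Shansky factorization theorem, and the plan is to combine two standard features of Casimir functions with a direct differentiation of~\eqref{thm1:fact}. Set $X:=dH_{\alpha_0}\in\mathfrak g$ and let $P_\pm\colon\mathfrak g\to\mathfrak g_\pm$ be the projections associated with $\mathfrak g=\mathfrak g_+\oplus\mathfrak g_-$, so that $R=P_+-P_-$ and $P_+=\frac{1}{2}(R+\un)$. The two facts I would record first, both coming from $H$ being a Casimir of $\{\cdot,\cdot\}_{\mathfrak g}$, are: (i) $\operatorname{ad}^{\sharp}_{dH_{\alpha}}(\alpha)=0$ for all $\alpha\in\mathfrak g^\ast$; and (ii) $dH$ is $\operatorname{Ad}$-equivariant, $dH_{\operatorname{Ad}^{\sharp}_{g}\alpha}=\operatorname{Ad}_g(dH_{\alpha})$, which follows by differentiating the coadjoint invariance $H(\operatorname{Ad}^{\sharp}_{g}\alpha)=H(\alpha)$ in the $\alpha$-variable. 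With~(i) the curve $\alpha(t)$ is already seen to be well defined and to start at $\alpha_0$: since $\operatorname{Ad}^{\sharp}_{\exp(tX)}\alpha_0=\alpha_0$ we get $\operatorname{Ad}^{\sharp}_{g_-(t)}\alpha_0=\operatorname{Ad}^{\sharp}_{g_+(t)^{-1}}\operatorname{Ad}^{\sharp}_{\exp(tX)}\alpha_0=\operatorname{Ad}^{\sharp}_{g_+(t)^{-1}}\alpha_0$, while $g_\pm(0)=e$ gives $\alpha(0)=\alpha_0$.

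The core of the argument is to differentiate $\alpha(t)=\operatorname{Ad}^{\sharp}_{g_+(t)^{-1}}\alpha_0$. Using the convention $\langle\operatorname{ad}^{\sharp}_x\alpha,y\rangle=-\langle\alpha,[x,y]\rangle$ this yields $\dot\alpha=-\operatorname{ad}^{\sharp}_{\Theta(t)}(\alpha)$, where $\Theta(t):=g_+(t)^{-1}\dot g_+(t)\in\mathfrak g_+$ is the left logarithmic derivative of $g_+$. It remains only to identify $\Theta(t)$. Writing $g_+=\exp(tX)g_-^{-1}$ and differentiating, a short computation gives $\Theta=\operatorname{Ad}_{g_-}(X)-\dot g_-g_-^{-1}$; by~(ii) and the alternative formula $\alpha(t)=\operatorname{Ad}^{\sharp}_{g_-(t)}\alpha_0$ one has $\operatorname{Ad}_{g_-(t)}(X)=dH_{\alpha(t)}$, so that
\[
dH_{\alpha(t)}=\Theta(t)+\dot g_-(t)g_-(t)^{-1},\qquad \Theta(t)\in\mathfrak g_+,\quad \dot g_-(t)g_-(t)^{-1}\in\mathfrak g_-.
\]
Uniqueness of the splitting $\mathfrak g=\mathfrak g_+\oplus\mathfrak g_-$ then forces $\Theta(t)=P_+\big(dH_{\alpha(t)}\big)$.

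Substituting back gives $\dot\alpha=-\operatorname{ad}^{\sharp}_{P_+dH_{\alpha}}(\alpha)$; since $P_+=\frac{1}{2}(R+\un)$ and $\operatorname{ad}^{\sharp}_{dH_{\alpha}}(\alpha)=0$ by~(i), the right-hand side is $-\frac{1}{2}\operatorname{ad}^{\sharp}_{RdH_{\alpha}}(\alpha)$, which is precisely~\eqref{eq:quattro}; the initial condition was checked above. The only delicate point I anticipate is the bookkeeping in the identification of $\Theta(t)$ --- keeping the left versus right logarithmic derivatives and the signs in $\operatorname{Ad}^{\sharp}$ and $\operatorname{ad}^{\sharp}$ straight --- because once~\eqref{thm1:fact} has been differentiated, the identification $\Theta(t)=P_+dH_{\alpha(t)}$ is immediate from uniqueness of the decomposition, and the two uses of the Casimir property~(i) do the rest.
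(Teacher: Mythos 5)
The paper itself gives no proof of this theorem: it is quoted from Semenov--Tian--Shansky \cite{STS}, with the reader referred to the literature, so there is no internal argument to compare yours against. Your proposal is the standard proof from those references and it is correct: the two Casimir facts (i) $\operatorname{ad}^{\sharp}_{dH_{\alpha}}(\alpha)=0$ and (ii) $\operatorname{Ad}$-equivariance of $dH$, the differentiation of $\alpha(t)=\operatorname{Ad}^{\sharp}_{g_+(t)^{-1}}\alpha_0$ producing the left logarithmic derivative $\Theta=g_+^{-1}\dot g_+\in\mathfrak g_+$, and the identification $\Theta=P_+\big(dH_{\alpha(t)}\big)$ from $dH_{\alpha(t)}=\operatorname{Ad}_{g_-(t)}dH_{\alpha_0}=\Theta+\dot g_- g_-^{-1}$ together with uniqueness of the splitting $\mathfrak g=\mathfrak g_+\oplus\mathfrak g_-$, are exactly the ingredients of the classical argument. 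Your claim that (i) alone gives $\operatorname{Ad}^{\sharp}_{\exp(tX)}\alpha_0=\alpha_0$ is also fine, since $\operatorname{Ad}^{\sharp}_{\exp(tX)}=\exp\big(t\operatorname{ad}^{\sharp}_X\big)$ and all iterates $(\operatorname{ad}^{\sharp}_X)^k\alpha_0$ vanish once $\operatorname{ad}^{\sharp}_X\alpha_0=0$.

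One bookkeeping point worth flagging, since you anticipate it yourself: you fix $R=P_+-P_-$, i.e., $P_+=\tfrac12(R+\operatorname{id})$, which is what makes the computation land exactly on~\eqref{eq:quattro} with the stated roles of $g_\pm$; the paper's Section~2, however, sets $R:=\operatorname{id}-2\pi_+=\pi_--\pi_+$, the opposite sign. Since the modified Yang--Baxter equation~\eqref{eq:mcybe} is invariant under $R\mapsto -R$, this is purely a convention choice, but with the Section-2 sign the final substitution via (i) would produce $+\tfrac12\operatorname{ad}^{\sharp}_{RdH_\alpha}(\alpha)$ (equivalently, it swaps which factor of~\eqref{thm1:fact} drags $\alpha_0$). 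Stating explicitly at the outset which sign of $R$ and which convention for $\operatorname{Ad}^{\sharp}$, $\operatorname{ad}^{\sharp}$ you use would make the proof airtight; no mathematical idea is missing.
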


 The previous theorem connects a certain factorization of elements in the Lie group $G$ with the solution of Lie bracket f\/low equations in the corresponding (dual, $\mathfrak g^*$, of the) Lie algebra~$\mathfrak{g}$. The main aim of this work is to explore this result in the framework of the Lie enveloping algebra of a post-Lie algebra def\/ined on~$\mathfrak{g}$ in terms of an $R$-matrix.

A \emph{post-Lie algebra} \cite{Burde,EFLMK,LMK,Vallette} consists of a vector space $V$ equipped with two Lie brac\-kets~$[ \cdot , \cdot ]$ and $\llbracket \cdot,\cdot \rrbracket$ as well as a non-commutative and non-associative product $\tr\colon  V \otimes V \to V$, such that the following identity holds
\begin{gather}
\label{def:PostLieRel}
	 \llbracket x,y \rrbracket = x \tr y - y \tr x + [x,y].
\end{gather}
Further  below we will state the precise def\/inition and relations that characterize such an algebraic structure. In light of~\eqref{eq:mcybe} and~\eqref{eq:double}, the third product on the Lie algebra $(\mathfrak{g}, [ \cdot , \cdot ])$ is given in terms of the $R$-matrix map~\cite{GuoBaiNi}, $x \tr y := [\frac{1}{2}(R + \operatorname{id})(x),y]$, $x,y \in \mathfrak g$. Lifting the post-Lie algebra to the Lie enveloping algebra of the Lie algebra $(\mathfrak{g}, [ \cdot , \cdot ])$ allows us to def\/ine another associative product on $\mathcal{U}(\mathfrak{g})$, which is compatible with the latter's coalgebra structure \cite{EFLMK}. The resulting Hopf algebra is isomorphic -- as a Hopf algebra -- to the Lie enveloping algebra of the second Lie algebra $(\bar{\mathfrak{g}}, \llbracket \cdot , \cdot \rrbracket)$. As a result $\mathcal{U}(\mathfrak{g})$ is equipped with two natural exponential maps, and the relation between those and the corresponding Lie groups in $\mathcal{U}(\mathfrak{g})$ is captured through a~Magnus-type dif\/ferential equation. This gives rise to explicit solutions of the factorization~\eqref{thm1:fact} in Theorem~\ref{thm:factorization}.

\looseness=1
We close this introduction with two remarks. First we would like to mention that dif\/ferential geometry is a natural place to look for examples of post-Lie algebras. Indeed, a Koszul connection yields a $\mathbb{R}$-bilinear product on the space of smooth vector f\/ields $\mathcal{X}(\mathcal{M})$ on a mani\-fold~$\mathcal{M}$. Flatness and constant torsion together with the Bianchi identities imply relation~\eqref{def:PostLieRel} between the Jacobi--Lie bracket of vector f\/ields, the torsion itself, and the product def\/ined in terms of the connection~\cite{LMK}.
Second, we would like to stress that the formalism introduced in this note is based on the theory of classical $R$-matrices, and will be applied only in the context of classical dynamical systems, whose descriptions are given in terms of isospectral f\/lows. However, saying this, it is worth mentioning that a similar formalism was used in~\cite{STS1} and~\cite{RSTS} to the study of quantum groups and quantum integrable systems, see also Remark~\ref{rem:im} \mbox{below}.

\emph{Outline of the paper.} Sections~\ref{sect:R-Matrices} and \ref{sect:RmatLieAdm} contain several preliminary results, which are crucial for the main statements of this paper, to be found in Sections~\ref{sect:LieEnveloping} and~\ref{sect:PostLieBracketFlow}.
 More precisely, in Section~\ref{sect:R-Matrices} we quickly recall the basic notions of $R$-matrices and their relations to factorization problems already mentioned above. Section~\ref{sect:RmatLieAdm} collects some basic facts about Lie-admissible algebras and post-Lie algebras. In particular, it is shown that solutions of the modif\/ied classical Yang--Baxter equation yield a post-Lie algebra structure on the original Lie algebra~$\mathfrak{g}$.
 In Section~\ref{sect:LieEnveloping}, after recalling some important result on the universal enveloping algebra of a post-Lie algebra, we state a factorization theorem for the generators of a group $G^{\ast}$ sitting inside the universal enveloping algebra of any Lie algebra endowed with a solution of the modif\/ied classical Yang--Baxter equation. Furthermore, a distinguished linear isomorphism is def\/ined between the universal enveloping algebra $\mathcal U(\mathfrak g)$ of a Lie algebra $\mathfrak g$ supporting an $R$-matrix, and the universal enveloping algebra~$\mathcal U(\bar{\mathfrak g})$ of the corresponding double Lie algebra. Moreover, it is shown that by swapping the associative product of~$\mathcal U(\mathfrak g)$ for a~new product def\/ined by extending the post-Lie product def\/ined on~$\mathfrak g$ in terms of the $R$-matrix to $\mathcal U(\mathfrak g)$, such a~linear isomorphism becomes a~morphism of associative algebras. Finally, in Section~\ref{sect:PostLieBracketFlow} the post-Lie algebra structure is invoked to show how the $\BCH$-recursion follows as the solution of a~Magnus-type dif\/ferential equation. This is then applied to Lie bracket f\/lows. It is shown that the solution of a Lie bracket f\/low on a Lie algebra $\mathfrak g$ endowed with a solution of the modif\/ied classical Yang--Baxter equation can be described, under suitable convergence assumptions, in terms of the generators of the group~$G^{\ast}$.

In the following the ground f\/ield~$\mathbb{K}$ is of characteristic zero, and~$\mathbb{K}$-algebras are assumed to be associative and unital, if not stated otherwise.

\section[$R$-matrices and factorization]{$\boldsymbol{R}$-matrices and factorization}
\label{sect:R-Matrices}

We consider a Lie algebra $\mathfrak g$ together with an $R$-matrix $\pi_+\colon \mathfrak g \to \mathfrak g$. Both $\pi_+$ and the map $\pi_-:= \operatorname{id} - \pi_+$ satisfy the Lie algebra identity
\begin{gather}
\label{mYBE}
	[\pi_\pm(x),\pi_\pm(y)] + \pi_\pm([x,y]) = \pi_\pm([\pi_\pm(x),y] + [x,\pi_\pm(y)]).
\end{gather}
Note that the map $R := \operatorname{id} - 2 \pi_+$ satisf\/ies the modif\/ied classical Yang--Baxter equation \eqref{eq:mcybe}. For details we refer the reader to \cite{BaBeTa,Faybusovich,RSTS,STS}. In the following theorem we collect some well-known results.

\begin{Theorem}[\cite{STS}]
If $\pi_-$ is a solution of \eqref{mYBE}, then the bracket $\llbracket \cdot,\cdot \rrbracket$ defined for all $x,y \in\mathfrak g$~by
\begin{gather}
\label{doublebracket}
	\llbracket x,y \rrbracket := [\pi_-(x),y] + [x,\pi_-(y)] - [x,y] = [\pi_-(x),\pi_-(y)] - [\pi_+(x),\pi_+(y)],
\end{gather}
satisfies the Jacobi identity, and therefore defines the so-called double Lie algebra $\bar{\mathfrak{g}}$ on the vector space underlying $\mathfrak g$. Both images $\mathfrak g_\pm:=\pi_\pm(\mathfrak g)$ form Lie subalgebras, and $\pi_\mp (\llbracket x,y \rrbracket) = \pm [\pi_\mp(x),\pi_\mp(y)]$.
\end{Theorem}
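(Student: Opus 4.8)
The plan is to verify the three assertions in order, relying on the fact that $\pi_-$ (and hence $\pi_+ = \operatorname{id} - \pi_-$) satisfies \eqref{mYBE}, since the two statements about $\pi_+$ and $\pi_-$ are symmetric in form. First I would establish the second equality in \eqref{doublebracket}: expanding $[\pi_-(x),\pi_-(y)] - [\pi_+(x),\pi_+(y)]$ using $\pi_+ = \operatorname{id} - \pi_-$ gives $[\pi_-(x),\pi_-(y)] - [x-\pi_-(x),y-\pi_-(y)]$, and the cross terms collapse to $[\pi_-(x),y] + [x,\pi_-(y)] - [x,y]$, which is a one-line bilinear computation requiring only bilinearity of the bracket. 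So both presentations of $\llbracket\cdot,\cdot\rrbracket$ agree.

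Next I would check the Jacobi identity for $\llbracket\cdot,\cdot\rrbracket$. The most economical route is to use the \emph{second} expression, $\llbracket x,y\rrbracket = [\pi_-(x),\pi_-(y)] - [\pi_+(x),\pi_+(y)]$. For this it is convenient to first prove the last claim of the theorem, $\pi_\mp(\llbracket x,y\rrbracket) = \pm[\pi_\mp(x),\pi_\mp(y)]$: applying $\pi_-$ to the first form of $\llbracket x,y\rrbracket$ and using \eqref{mYBE} for $\pi_-$ to rewrite $\pi_-([\pi_-(x),y]+[x,\pi_-(y)])$ as $[\pi_-(x),\pi_-(y)] + \pi_-([x,y])$, one gets $\pi_-(\llbracket x,y\rrbracket) = [\pi_-(x),\pi_-(y)]$; the statement for $\pi_+$ follows either symmetrically or by subtracting from $\llbracket x,y\rrbracket = \pi_+(\llbracket x,y\rrbracket) + \pi_-(\llbracket x,y\rrbracket)$ and comparing with the second form. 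Incidentally this shows $\mathfrak g_\pm = \pi_\pm(\mathfrak g)$ are closed under $[\cdot,\cdot]$, hence are Lie subalgebras. Now for Jacobi: write $a = \pi_-(x)$, $b = \pi_-(y)$, $c = \pi_-(z)$ and similarly $a' = \pi_+(x)$, etc. Then $\llbracket\llbracket x,y\rrbracket,z\rrbracket = [\pi_-\llbracket x,y\rrbracket,\pi_-(z)] - [\pi_+\llbracket x,y\rrbracket,\pi_+(z)] = [[a,b],c] - [[a',b'],c']$ using the projection identity just proved. Summing cyclically, the $\pi_-$-part gives $[[a,b],c] + [[b,c],a] + [[c,a],b] = 0$ by Jacobi in $\mathfrak g$, and likewise the $\pi_+$-part vanishes, so $\llbracket\cdot,\cdot\rrbracket$ satisfies Jacobi. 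Antisymmetry is immediate from either formula.

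The one genuinely delicate point — the step I would flag as the main obstacle — is the application of \eqref{mYBE} in deriving the projection identity $\pi_-(\llbracket x,y\rrbracket) = [\pi_-(x),\pi_-(y)]$. One must be careful that \eqref{mYBE} is being used in the correct direction (it expresses $\pi_-$ of a certain combination of brackets in terms of brackets of $\pi_-$-images), and that the hypothesis is precisely that $\pi_-$ solves it. Once this identity is in hand, everything else is bilinear bookkeeping and the Jacobi identity in $\mathfrak g$. I would therefore structure the proof as: (1) the algebraic identity equating the two forms of $\llbracket\cdot,\cdot\rrbracket$; (2) the projection identities via \eqref{mYBE}, yielding also that $\mathfrak g_\pm$ are subalgebras; (3) antisymmetry and Jacobi for $\llbracket\cdot,\cdot\rrbracket$ via the second form and step (2).
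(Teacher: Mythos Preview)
The paper does not actually give a proof of this theorem: it is stated as a known result with a citation to~\cite{STS}, prefaced by ``In the following theorem we collect some well-known results.'' So there is nothing to compare against, and your proposal supplies an argument the paper omits.

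Your argument is correct and well organised. The order you chose --- first verify the two presentations of $\llbracket\cdot,\cdot\rrbracket$ agree, then use \eqref{mYBE} to get the projection identities $\pi_\mp(\llbracket x,y\rrbracket)=\pm[\pi_\mp(x),\pi_\mp(y)]$, and finally deduce both the subalgebra claim and the Jacobi identity from those --- is the cleanest route. One small slip: in your Jacobi computation you write $\llbracket\llbracket x,y\rrbracket,z\rrbracket=[[a,b],c]-[[a',b'],c']$, but since $\pi_+(\llbracket x,y\rrbracket)=-[a',b']$ carries a sign, the second term should be $+[[a',b'],c']$. This does not affect the conclusion, as each cyclic sum vanishes separately by the Jacobi identity in $\mathfrak g$, but you should correct the sign in a final write-up.
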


As mentioned in the introduction, solutions of \eqref{mYBE} are intimately related to factorizations of the Lie group $G$, see \cite{Faybusovich} for details. For simplicity we assume that~$\pi_+$ is a~projector~-- which is covering many interesting cases. The subgroups corresponding to the Lie subalgebras $\mathfrak{g}_\pm$ are denoted~$G_\pm$. For $a_0 \in \mathfrak g$ and a small enough $t$, that is, in a suf\/f\/iciently small neighborhood of the unit of the group $G$ corresponding to $\mathfrak g$, the following unique factorization holds
\begin{gather}
\label{Fact1}
	\exp(t a_0) = g_+(t)g_-(t),
\end{gather}
with $g_\pm (t) \in G_\pm$ for all $t$ for which they are def\/ined. Def\/ine the map
\begin{gather*}
	a(t) := g^{-1}_+(t) a_0 g_+(t) = g_-(t) a_0 g^{-1}_-(t),
\end{gather*}
and recall that it satisf\/ies the Lie bracket initial value problem
\begin{gather*}
	\dot{a}(t) =  [a(t),p_+(t)],\qquad a(0)=a_0,
\end{gather*}
with $p_+(t) := g_+^{-1}(t) \frac{d}{dt} (g_+(t))$. From~\eqref{Fact1} it follows that
\begin{gather*}
	a(t) = g_+^{-1}(t) \frac{d}{dt} (g_+(t)) +  \frac{d}{dt} (g_-(t))g_-^{-1}(t).
\end{gather*}
Since $\pi_\pm$ are projectors we obtain $g_+^{-1}(t) \frac{d}{dt} (g_+(t)) \in \mathfrak{g}_+$ and $\frac{d}{dt} (g_-(t))g_-^{-1}(t) \in \mathfrak{g}_-$, and therefore $p_+(t)=\pi_+(a(t))$.

Anticipating what follows below, we remark that in \cite{CasasIserles} the function $\Omega(t;a_0)$ was described in terms of a Magnus-type dif\/ferential equation, such that $g_+(t)=\exp(\Omega(t;a_0))$. In this paper we will describe this Magnus-type dif\/ferential equation using a post-Lie algebra, and thereby clarify its link to~\eqref{Fact1} by showing that its solution is given in terms of the $\BCH$-recursion~\cite{EGM}.

\section[Lie-admissible algebras, post-Lie algebras and $R$-matrices]{Lie-admissible algebras, post-Lie algebras and $\boldsymbol{R}$-matrices}
\label{sect:RmatLieAdm}

\begin{Definition}
A $\mathbb K$-algebra $(A,\cdot)$, not necessarily associative, is called \emph{Lie-admissible} if the commutator $[a,b] := a \cdot b - b \cdot a$ def\/ines a Lie bracket. In this case, the corresponding Lie algebra $(A,[\cdot,\cdot])$ will be denoted by $A_{\text{Lie}}$.
\end{Definition}

Note that associative $\mathbb K$-algebras are Lie-admissible. Another class of Lie-admissible algebras is introduced in the following def\/inition.

\begin{Definition}
The algebra $(A,\diamond)$ with binary product $\diamond\colon A \otimes A \to A$ will be called a \emph{$($left$)$ pre-Lie algebra}, if for all $x,y,z \in A$
\begin{gather}
\label{preLie}
	 {\rm{a}}_{\diamond}(x,y,z)={\rm{a}}_{\diamond}(y,x,z),
\end{gather}
where ${\rm{a}}_{\diamond}(x,y,z) := x \diamond (y \diamond z) - (x \diamond y) \diamond z$ is the associator.
\end{Definition}

Pre-Lie algebras are Lie-admissible. Indeed, note that identity~\eqref{preLie} can be written as $\ell_{[x,y] \diamond}(z) = [\ell_{x \diamond},\ell_{y \diamond}](z)$, where the linear map $\ell_{x \diamond}\colon A \to A$ is def\/ined by $\ell_{x \diamond}(y):=x \diamond y$ and the bracket on the left-hand side is def\/ined by $[x,y] := x \diamond y - y \diamond x$. As a consequence it satisf\/ies the Jacobi identity, turning $A$ into a Lie algebra. See \cite{Cartier,Manchon} for more details. We now turn to the def\/inition of post-Lie algebra following reference~\cite{LMK}.

\begin{Definition} \label{def:postLie}
Let $(\mathfrak g, [\cdot,\cdot])$ be a Lie algebra, and let $\triangleright  \colon {\mathfrak g} \otimes {\mathfrak g} \rightarrow \mathfrak g$ be a binary product such that, for all $x,y,z \in \mathfrak g$,
\begin{gather*}
	x \triangleright [y,z] = [x\triangleright y , z] + [y , x \triangleright z],
\end{gather*}
and
\begin{gather*}
	[x,y] \triangleright z = {\rm{a}}_{\triangleright  }(x,y,z) - {\rm{a}}_{\triangleright  }(y,x,z).
\end{gather*}
Then the triplet $(\mathfrak{g}, [\cdot,\cdot], \triangleright)$ is called a \emph{post-Lie algebra}.
\end{Definition}

\begin{Remark}\label{rmk:postlie-prod}
Let $(\mathfrak{g}, [\cdot,\cdot], \triangleright)$ be a post-Lie algebra.
\begin{enumerate}\itemsep=0pt
	\item[a)] If $x \blacktriangleright y := x \triangleright y + [x,y]$, then $(\mathfrak g, -[\cdot,\cdot], \blacktriangleright)$
	 	is a post-Lie algebra.	\label{b}  \smallskip
	\item[b)] If $x \succ y := x \triangleright y + \frac{1}{2}[x,y]$, then $(\mathfrak g,\succ)$ is
		Lie-admissible.		\label{c}
\end{enumerate}
\end{Remark}

\begin{Proposition}
\label{prop:post-lie}
Let $(\mathfrak g, [\cdot, \cdot], \tr)$ be a post-Lie algebra. The bracket
\begin{gather}
\label{postLie3}
	\llbracket x,y \rrbracket := x \tr y - y \tr x + [x,y]
\end{gather}
satisfies the Jacobi identity for all $x, y \in \mathfrak g$. The Lie algebra is written as $(\bar{\mathfrak g},\llbracket \cdot,\cdot \rrbracket)$.
\end{Proposition}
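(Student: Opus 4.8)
The plan is to exhibit $(\bar{\mathfrak g},\llbracket\cdot,\cdot\rrbracket)$ as (isomorphic to) a Lie subalgebra of a semidirect product of Lie algebras, so that the Jacobi identity is inherited for free. Write $D_x\colon\mathfrak g\to\mathfrak g$ for the operator $D_x(y):=x\tr y$. The first post-Lie axiom says precisely that each $D_x$ is a derivation of $(\mathfrak g,[\cdot,\cdot])$. For the second axiom, expand the two associators in $[x,y]\tr z={\rm a}_\tr(x,y,z)-{\rm a}_\tr(y,x,z)$ and move the two ``outer'' terms $(x\tr y)\tr z$ and $(y\tr x)\tr z$ to the left-hand side; since $[x,y]\tr z+(x\tr y)\tr z-(y\tr x)\tr z=\llbracket x,y\rrbracket\tr z$, this rearranges into the operator identity $D_{\llbracket x,y\rrbracket}=[D_x,D_y]$ in $\operatorname{End}(\mathfrak g)$.

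Next I would consider the semidirect product Lie algebra $\mathfrak h$, with underlying space $\operatorname{Der}(\mathfrak g,[\cdot,\cdot])\oplus\mathfrak g$ and bracket $[(\phi,a),(\psi,b)]_{\mathfrak h}:=\big([\phi,\psi],\ \phi(b)-\psi(a)+[a,b]\big)$, which is a standard construction satisfying the Jacobi identity. Define $\iota\colon\mathfrak g\to\mathfrak h$ by $\iota(x):=(D_x,x)$; it is injective because its $\mathfrak g$-component is the identity. Using the two reformulated axioms one computes
\begin{gather*}
	[\iota(x),\iota(y)]_{\mathfrak h}=\big([D_x,D_y],\ x\tr y-y\tr x+[x,y]\big)=\big(D_{\llbracket x,y\rrbracket},\ \llbracket x,y\rrbracket\big)=\iota\big(\llbracket x,y\rrbracket\big),
\end{gather*}
so the image of $\iota$ is a Lie subalgebra of $\mathfrak h$ and $\iota$ is a Lie algebra isomorphism onto it. Hence $\llbracket\cdot,\cdot\rrbracket$ is the pullback along $\iota$ of a genuine Lie bracket, and therefore satisfies the Jacobi identity.

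The only delicate point is the rearrangement of the second post-Lie axiom into the operator identity $D_{\llbracket x,y\rrbracket}=[D_x,D_y]$; once that is in hand everything else is formal (the commutator of two derivations is again a derivation, so $\iota$ really does land in $\mathfrak h$). Alternatively, one can bypass $\mathfrak h$ and prove Jacobi by directly expanding $\llbracket\llbracket x,y\rrbracket,z\rrbracket+\llbracket\llbracket y,z\rrbracket,x\rrbracket+\llbracket\llbracket z,x\rrbracket,y\rrbracket$ via the definition: the purely $[\cdot,\cdot]$ terms cancel by the Jacobi identity in $\mathfrak g$, the nested $\tr$-terms cancel by the operator identity, and the remaining mixed terms cancel because each $D_z$ is a derivation of $[\cdot,\cdot]$. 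One may also simply note that $\llbracket x,y\rrbracket$ is exactly the commutator of the product $\succ$ of Remark~\ref{rmk:postlie-prod}, so that the claim is equivalent to the Lie-admissibility of $\succ$.
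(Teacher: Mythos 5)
Your argument is correct, and it fills a gap the paper leaves open: the paper states Proposition~\ref{prop:post-lie} without proof, recalling it from the post-Lie literature (e.g.,~\cite{LMK}). Your key step, rewriting the second post-Lie axiom as the operator identity $D_{\llbracket x,y\rrbracket}=[D_x,D_y]$ with $D_x=x\tr(\cdot)$, is exactly the standard reformulation, and it is done correctly: from $[x,y]\tr z={\rm a}_\tr(x,y,z)-{\rm a}_\tr(y,x,z)$ one indeed gets $\llbracket x,y\rrbracket\tr z=x\tr(y\tr z)-y\tr(x\tr z)$. The embedding $x\mapsto(D_x,x)$ into the semidirect product $\operatorname{Der}(\mathfrak g,[\cdot,\cdot])\ltimes\mathfrak g$ then gives Jacobi for free, since the first axiom guarantees $D_x$ is a derivation and the bracket of $\iota(x)$ and $\iota(y)$ reproduces $(D_{\llbracket x,y\rrbracket},\llbracket x,y\rrbracket)$; injectivity of $\iota$ closes the argument. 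This structural route is more conceptual than the usual proof, which is the direct expansion you sketch as the alternative (and that expansion does work exactly as you say: the triple $[\cdot,\cdot]$ terms vanish by Jacobi in $\mathfrak g$, the nested $\tr$ terms by the operator identity, the mixed terms by the derivation property). One small caution: your last alternative, reducing the claim to the Lie-admissibility of $\succ$ from Remark~\ref{rmk:postlie-prod}, should not be used as the primary justification, since that remark is itself stated in the paper without proof and is most naturally verified by the same computation; as a cross-check it is fine, but as written it would be circular. With the semidirect-product argument as the main proof, the result stands.
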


\begin{Remark}
Pre- and post-Lie algebras are important in the theory of numerical methods for dif\/ferential equations. We refer the reader to \cite{Cartier, ChaLiv, EFLMK, Manchon,LMK} for background and details.
\end{Remark}

In the introduction we pointed at an archetypal example of post-Lie algebra coming from dif\/ferential geometry. Here we will present an algebraic example using $R$-matrices. Let $\pi_+ \in \operatorname{End}(\mathfrak g)$ satisfy identity~\eqref{mYBE}, and def\/ine the following binary product on $\mathfrak g$:
\begin{gather}
\label{def:RBpostLie}
	a \triangleright b := - [\pi_+(a),b].
\end{gather}

\begin{Theorem}[\cite{GuoBaiNi}]
\label{thm:postLie1}
The product \eqref{def:RBpostLie} defines a post-Lie algebra structure on~$\mathfrak g$.
\end{Theorem}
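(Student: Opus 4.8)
The plan is to verify directly the two defining axioms of Definition~\ref{def:postLie} for the product $a\triangleright b := -[\pi_+(a),b]$ on $\mathfrak g$, using as the sole structural input the identity~\eqref{mYBE} satisfied by $\pi_+$ together with the Jacobi identity in $(\mathfrak g,[\cdot,\cdot])$.

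First I would check that $\triangleright$ is a derivation of the bracket in its second argument, i.e.\ $x\triangleright[y,z]=[x\triangleright y,z]+[y,x\triangleright z]$. Writing everything out, the left-hand side is $-[\pi_+(x),[y,z]]$ and the right-hand side is $-[[\pi_+(x),y],z]-[y,[\pi_+(x),z]]$; these coincide immediately by the Jacobi identity applied to $\pi_+(x),y,z$. So this axiom costs nothing and uses no property of $\pi_+$ at all.

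The substantive step is the second axiom, $[x,y]\triangleright z = {\rm a}_{\triangleright}(x,y,z)-{\rm a}_{\triangleright}(y,x,z)$. Here I would expand the associator: ${\rm a}_{\triangleright}(x,y,z)=x\triangleright(y\triangleright z)-(x\triangleright y)\triangleright z = [\pi_+(x),[\pi_+(y),z]] + [\pi_+([\pi_+(x),y]),z]$. Antisymmetrizing in $x,y$, the first bracket terms combine via Jacobi into $[[\pi_+(x),\pi_+(y)],z]$, and the second terms give $[\pi_+([\pi_+(x),y]) - \pi_+([\pi_+(y),x]),z] = [\pi_+([\pi_+(x),y]+[x,\pi_+(y)]),z]$ after using bilinearity and antisymmetry of the bracket. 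So the right-hand side equals $\big[\,[\pi_+(x),\pi_+(y)] + \pi_+([\pi_+(x),y]+[x,\pi_+(y)])\,,\,z\,\big]$ — wait, I need to be careful with signs coming from the $-[\pi_+(\cdot),\cdot]$ convention, but in any case the key point is that the coefficient of $z$ becomes exactly the combination appearing in~\eqref{mYBE}. Meanwhile the left-hand side $[x,y]\triangleright z = -[\pi_+([x,y]),z]$. Thus the axiom reduces precisely to the identity $[\pi_+(x),\pi_+(y)] - \pi_+([\pi_+(x),y]+[x,\pi_+(y)]) = -\pi_+([x,y])$, which is~\eqref{mYBE} rearranged.

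The main obstacle is purely bookkeeping: tracking the signs introduced by the definition $a\triangleright b=-[\pi_+(a),b]$ through the two nested commutators in the associator, and making sure the Jacobi recombination of the "outer" terms lands on $[[\pi_+(x),\pi_+(y)],z]$ with the correct sign so that it matches the $[\pi_+(x),\pi_+(y)]$ term of~\eqref{mYBE} rather than its negative. Once the two axioms are established, $(\mathfrak g,[\cdot,\cdot],\triangleright)$ is a post-Lie algebra by definition, completing the proof; one may optionally remark that the associated bracket~\eqref{postLie3} then recovers the double bracket~\eqref{doublebracket}, which is consistent with the earlier theorem.
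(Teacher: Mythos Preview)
The paper does not give its own proof of this theorem; it simply cites the result from \cite{GuoBaiNi}. Your direct verification of the two post-Lie axioms is the natural approach and is correct in outline and in its conclusion.

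One concrete slip worth fixing: in your expansion of the associator you write ${\rm a}_{\triangleright}(x,y,z) = [\pi_+(x),[\pi_+(y),z]] + [\pi_+([\pi_+(x),y]),z]$, but the second term should carry a minus sign. Indeed $(x\triangleright y)\triangleright z = -[\pi_+(-[\pi_+(x),y]),z] = +[\pi_+([\pi_+(x),y]),z]$, so subtracting it gives ${\rm a}_{\triangleright}(x,y,z) = [\pi_+(x),[\pi_+(y),z]] - [\pi_+([\pi_+(x),y]),z]$. With this correction the antisymmetrization yields
\[
{\rm a}_{\triangleright}(x,y,z)-{\rm a}_{\triangleright}(y,x,z)
= \big[\,[\pi_+(x),\pi_+(y)] - \pi_+([\pi_+(x),y]+[x,\pi_+(y)])\,,\,z\,\big],
\]
and comparing with $[x,y]\triangleright z = -[\pi_+([x,y]),z]$ reduces the axiom exactly to \eqref{mYBE}, as you state. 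You flagged the sign issue yourself; once you carry it through cleanly the argument is complete.
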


It turns out that the new Lie bracket def\/ined in terms of this post-Lie product and the original Lie bracket is the one given in~\eqref{doublebracket}. Indeed, for $\pi_+:= \operatorname{id} - \pi_-$,
\begin{gather*}
	x \triangleright y - y \triangleright x + [x,y] = [\pi_-(x),y] + [x,\pi_-(y)] - [x,y] = \llbracket x,y \rrbracket.
\end{gather*}

\begin{Remark}
Returning to item a) in Remark \ref{rmk:postlie-prod}, note that the second post-Lie algebra $(\mathfrak g, -[\cdot,\cdot], \blacktriangleright)$ is def\/ined in terms of $\pi_-:=\operatorname{id}- \pi_+$: the product is $x \blacktriangleright y := x \triangleright y + [x,y] = [\pi_-(a),b]$.

As for item b), one f\/inds that the Lie-admissible algebra $(\mathfrak g,\succ)$ is def\/ined through the binary composition $x \succ y := x \triangleright y + \frac{1}{2}[x,y] = [\frac{1}{2}R(x),y]$, where $R:= \operatorname{id} - 2 \pi_-$. The Lie bracket \eqref{postLie3} is then given by $\llbracket x,y \rrbracket := x \succ y - y \succ x,$ which is just another way of writing \eqref{eq:double}. With $\tilde{R}:=\frac{1}{2}R$, one can deduce from $[\tilde{R}(x),\tilde{R}(y)] - \tilde{R}\big([\tilde{R}(x),y]+[x,\tilde{R}(y)]\big) = -\frac{1}{4}[x,y]$ that
\begin{gather*}
	 {\rm{a}}_{\succ}(x,y,z) - {\rm{a}}_{\succ}(y,x,z) = -\frac{1}{4}[[x,y],z].
\end{gather*}
\end{Remark}

\section{Lie enveloping algebra of a post-Lie algebra}
\label{sect:LieEnveloping}

In \cite{EFLMK} the Lie enveloping algebra of a post-Lie algebra was described. Here we recall the basic results without proofs. Let $(\mathfrak g, [\cdot, \cdot], \tr)$ be a post-Lie algebra, and $\mathcal{U}(\mathfrak g)$ the universal enveloping algebra of the Lie algebra  $(\mathfrak g, [\cdot, \cdot])$. Recall that $\mathcal{U}(\mathfrak g)$ with concatenation product is a non-commutative, cocommutative f\/iltered Hopf algebra generated by~$\mathfrak g \hookrightarrow \mathcal{U}(\mathfrak g)$. The coshuf\/f\/le coproduct is def\/ined for $x \in \mathfrak g$ by $\Delta(x) := x \otimes \un + \un \otimes x$, i.e., elements of $\mathfrak g$ are primitive. It is extended multiplicatively to all of $\mathcal{U}(\mathfrak g)$. We use Sweedler's notation for the coproduct: $\Delta(T)=:T_{(1)} \otimes T_{(2)}$, see~\cite{Sw}. The counit is denoted by $\epsilon\colon \mathcal{U}(\mathfrak g) \to \mathbb{K}$, and the antipode $S\colon \mathcal{U}(\mathfrak g)\rightarrow \mathcal{U}(\mathfrak g)$ is def\/ined through $S(x_1\cdots x_k):=(-1)^k x_k\cdots x_1$ (in particular $S(x)=-x$ for all $x\in\mathfrak g$). Finally, remember that the universal property of $\mathcal{U}(\mathfrak g)$ implies that if $A$ is an associative algebra and $f\colon {\mathfrak g}\rightarrow A_{\text{Lie}}$ is a homomorphism of Lie algebras, then there exists a unique (unital associative algebra) morphism $F\colon \mathcal{U}(\mathfrak g) \rightarrow A$ such that  $F\circ i=f$, where $i\colon \mathfrak g \rightarrow \mathcal{U}(\mathfrak g)$ is the canonical embedding.

We have seen in Proposition~\ref{prop:post-lie} that the vector space underlying a post-Lie algebra carries two Lie algebras, $(\mathfrak g, [\cdot, \cdot])$ and $(\bar{\mathfrak g}, \llbracket \cdot , \cdot \rrbracket )$, related via the post-Lie product
\begin{gather*}
	\llbracket x,y \rrbracket = x \tr y - y \tr x + [x,y].
\end{gather*}

In what follows, $(\mathcal{U}(\bar{\mathfrak g}),\cdot)$ will denote the universal enveloping algebra of the Lie algebra $(\bar{\mathfrak g}, \llbracket \cdot , \cdot \rrbracket )$. In the next proposition the post-Lie product is extended to~$\mathcal{U}(\mathfrak g)$.

\begin{Theorem}[\cite{EFLMK}]\label{thm:OudomGuin}
There is a unique extension of the post-Lie product $\tr$ from $\mathfrak g$ to $\mathcal{U}(\mathfrak g)$. On $(\mathcal{U}(\mathfrak g),\tr)$ the product
\begin{gather}
\label{def:preLie}
	A * B := A_{(1)}(A_{(2)} \tr B)
\end{gather}
for $A,B \in \mathcal{U}(\mathfrak g)$ is associative and unital. Moreover, $(\mathcal{U}(\mathfrak g), *, \Delta)$ is a Hopf algebra isomorphic to $(\mathcal{U}(\bar{\mathfrak g}),\cdot,\Delta)$.
\end{Theorem}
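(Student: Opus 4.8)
The plan is to follow the Oudom--Guin construction of an enveloping algebra for a (pre- or) post-Lie algebra, adapted to the present setting. First I would define the extension of $\tr$ from $\mathfrak g$ to all of $\mathcal U(\mathfrak g)$ by induction on the filtration, imposing the rules that make $\tr$ behave like a ``connection'': for $x\in\mathfrak g$, $A,B\in\mathcal U(\mathfrak g)$,
\begin{gather*}
	\un \tr A := A, \qquad
	(xA)\tr B := x\tr(A\tr B) - (x\tr A)\tr B, \\
	A\tr(BC) := (A_{(1)}\tr B)(A_{(2)}\tr C),
\end{gather*}
together with $A\tr\un := \epsilon(A)\un$. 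One checks that these prescriptions are consistent (the only overlap being on $\mathfrak g$ itself, where they reproduce the original product) and that they determine $\tr$ uniquely, since $\mathcal U(\mathfrak g)$ is generated as an algebra by $\mathfrak g$ and as a coalgebra the comultiplication is known. This uniqueness is essentially forced: the second rule says $\ell_{xA\,\tr} = [\ell_{x\tr},\ell_{A\tr}]$ in $\operatorname{End}(\mathcal U(\mathfrak g))$, which recursively expresses $A\tr(-)$ in terms of the generators.

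Next I would verify that $\tr$ so extended still satisfies the two post-Lie axioms in the appropriate lifted form, i.e.\ that $x\tr(-)$ is a coalgebra morphism (a derivation-like, actually Hopf-algebra-endomorphism-like, condition) and that the ``curvature'' identity $[x,y]\tr A = \mathrm a_\tr(x,y,A) - \mathrm a_\tr(y,x,A)$ persists; both go by induction on the degree of $A$, using the defining rules above and the fact that they hold on $\mathfrak g$ by hypothesis. With these in hand, the associativity of $A*B := A_{(1)}(A_{(2)}\tr B)$ is a direct, if somewhat lengthy, computation: expanding $(A*B)*C$ and $A*(B*C)$ using coassociativity of $\Delta$, the multiplicativity rule $A\tr(BC)=(A_{(1)}\tr B)(A_{(2)}\tr C)$, and the $x A$-rule, both sides reduce to the same expression involving $A_{(1)}B_{(1)}\big((A_{(2)}B_{(2)})\tr\cdots\big)$ type terms. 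Unitality is immediate from $\un\tr A = A$ and $A\tr\un=\epsilon(A)\un$.

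Then I would check that $\Delta$ remains an algebra morphism for $*$ (so $(\mathcal U(\mathfrak g),*,\Delta)$ is a bialgebra, hence a Hopf algebra since it is connected/filtered), which again reduces to the multiplicativity of $\tr$ over the coproduct. Finally, to identify this Hopf algebra with $\mathcal U(\bar{\mathfrak g})$: the subspace $\mathfrak g\subset\mathcal U(\mathfrak g)$ consists of primitives for $\Delta$, and for $x,y\in\mathfrak g$ one computes $x*y - y*x = \big(x(\un\tr y)+\un(x\tr y)\big) - \big(y(\un\tr x)+\un(y\tr x)\big) = xy - yx + x\tr y - y\tr x = [x,y] + x\tr y - y\tr x = \llbracket x,y\rrbracket$ (with products on the right the concatenation product), so the inclusion $\mathfrak g\hookrightarrow(\mathcal U(\mathfrak g),*)$ is a Lie morphism $(\bar{\mathfrak g},\llbracket\cdot,\cdot\rrbracket)\to(\mathcal U(\mathfrak g),*)_{\mathrm{Lie}}$. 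By the universal property this extends to a Hopf algebra map $\mathcal U(\bar{\mathfrak g})\to(\mathcal U(\mathfrak g),*,\Delta)$; it is an isomorphism because on the associated graded it induces the identity on $\operatorname{Sym}(\mathfrak g)$ (both enveloping algebras have the same underlying coalgebra by Poincar\'e--Birkhoff--Witt, and the map is filtered and identity on generators).

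The main obstacle I expect is the consistency and well-definedness of the extended product $\tr$ on $\mathcal U(\mathfrak g)$ — making precise that the inductive rules do not conflict and genuinely assemble into a single operation — together with the bookkeeping in the associativity verification; neither is conceptually deep, but both require care with Sweedler notation and the interplay of the three rules. Everything downstream (bialgebra property, identification with $\mathcal U(\bar{\mathfrak g})$) then follows formally.
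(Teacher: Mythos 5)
Your proposal is correct and takes essentially the route of the source this paper relies on: the theorem is recalled here without proof from \cite{EFLMK} (cf.\ Remark~\ref{rmk:rem}\,i) and \cite{OudomGuin}), and your plan --- the inductive extension of $\tr$ via $\un\tr A=A$, $(xA)\tr B=x\tr(A\tr B)-(x\tr A)\tr B$, $A\tr(BC)=(A_{(1)}\tr B)(A_{(2)}\tr C)$, the resulting key identity $A\tr(B\tr C)=\bigl(A_{(1)}(A_{(2)}\tr B)\bigr)\tr C$ for associativity of $*$, compatibility with $\Delta$, and the filtered/PBW argument identifying $(\mathcal{U}(\mathfrak g),*,\Delta)$ with $\mathcal{U}(\bar{\mathfrak g})$ via $x*y-y*x=\llbracket x,y\rrbracket$ on generators --- is exactly the Oudom--Guin-type construction used there. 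One small slip to fix: your parenthetical claim $\ell_{xA\,\tr}=[\ell_{x\tr},\ell_{A\tr}]$ is not what your displayed rule says (it gives $\ell_{(xA)\tr}=\ell_{x\tr}\circ\ell_{A\tr}-\ell_{(x\tr A)\tr}$, not a commutator), but this does not affect the uniqueness argument or anything downstream.
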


\begin{Remark}\label{rmk:rem} A few remarks are in order.
\begin{enumerate}[i)]\itemsep=0pt

\item\label{rmk:0} The last statement in Theorem \ref{thm:OudomGuin} appeared in the context of pre-Lie algebras in \cite{OudomGuin}, and we refer the reader to \cite{EFLMK} for details. For the notation, see item (\ref{rmk:1})  below.

\item \label{rmk:1} In what follows we will be working with three Hopf algebras. We will consider $(\mathcal U(\mathfrak g),\cdot,\Delta)$, i.e., the universal enveloping algebra of $\mathfrak g$, and the Hopf algebra $(\mathcal{U}(\mathfrak g), *, \Delta)$, both def\/ined on the same underlying vector space, whose products are $\mu_\cdot(x,y)=x \cdot y$ and $\mu_\ast(x,y)= x * y$, respectively. Note that the coproduct $\Delta$ is the same for both, given by the one originally def\/ined on the universal enveloping algebra $(\mathcal U(\mathfrak g),\cdot,\Delta)$. According to the last statement in Theorem \ref{thm:OudomGuin} this map is an algebra morphism on $(\mathcal U(\mathfrak g),*)$. The third Hopf algebra we will consider is the universal enveloping algebra of the Lie algebra $(\overline{\mathfrak g},\llbracket \cdot , \cdot \rrbracket )$. Even though this Hopf algebra should be denoted as $(\mathcal U(\overline{\mathfrak g}),\overline{\cdot},\Delta_{\overline{\cdot}})$ we stick to a simplif\/ied notation, and denote its product and coproduct by $\cdot$ and $\Delta$, respectively.

\item \label{rmk:2} From now on, suitable completions of the above Hopf algebras will be considered, still denoted by the same symbols $\mathcal U(\mathfrak g):=(\mathcal U(\mathfrak g),\cdot,\Delta,)$, $\mathcal U_{\ast}(\mathfrak g):=(\mathcal{U}(\mathfrak g), *, \Delta)$ and  $\mathcal U(\overline{\mathfrak g}):=(\mathcal U(\overline{\mathfrak g}),\cdot,\Delta)$. Then, for any element $v \in\mathfrak g$,  one may consider the elements $\exp^{\cdot} v$, $\exp^{\overline{\cdot}} v$ and $\exp^*v$. For example, with $v^{*n}$ denoting the $n$-fold product $v * \cdots * v$,
\begin{gather*}
	\exp^{*} v := \sum_{n\geq 0}\frac{v^{*n}}{n!} \in \mathcal{U}_*(\mathfrak g).
\end{gather*}
A simple computation shows that each of these elements is \emph{group-like} in the corresponding Hopf algebra. For this reason one may consider $G$, $G^*$ and $\overline{G}$, the groups generated for $v \in \mathfrak g$ by the products of the elements of type $\exp^{\cdot} v$, $\exp^{\ast} v$ and $\exp^{\overline{\cdot}}v$, respectively. Moreover, to simplify notation, we will write $\exp v$ and $\exp^{\cdot}v$ to denote the exponentials of $v$ in $\mathcal U(\mathfrak g)$ and $\mathcal U(\bar{\mathfrak g})$, respectively.

\item \label{rmk:3} In what follows we will often need to use the classical $\BCH$-formula. Recall that $\BCH \colon {\mathfrak g} \times {\mathfrak g}\rightarrow {\mathfrak g}$ is def\/ined such that $\exp x \exp y = \exp \BCH(x,y)$, and
\begin{gather*}
	\BCH(x,y) = x + y + \frac{1}{2} [x,y] + \frac{1}{12} \big[x,[x,y]\big]
		 				+ \frac{1}{12} \big[y,[y,x]\big] - \frac{1}{24} \big[y,[x,[x,y]]\big] + \cdots,
\end{gather*}
where $x,y \in \mathfrak{g}$. The \emph{reduced $\BCH$-formula} is def\/ined by $\overline{\BCH}(x,y) := \BCH(x,y) - x - y$. From a formal point of view the $\BCH$-formula maps elements from~$\mathfrak g$ into the completion of~$\mathcal U(\mathfrak g)$. Without further comments, we will therefore assume that it is convergent. How\-ever, when working locally we will restrict ourselves to a suitable neighborhood~$U$ of the zero element of the Lie algebra~$\mathfrak g$.
\end{enumerate}
\end{Remark}

Beside the exponential maps introduced in (\ref{rmk:2}), we need the following \emph{ordered post-Lie exponential} in $\mathcal{U}(\mathfrak g)$.

\begin{Definition}
For any primitive element  $a \in \mathfrak g$, the right-ordered exponential $\exp^{\tr}\colon \mathfrak g \to \mathcal{U}(\mathfrak g)$ is def\/ined for $b \in \mathcal{U}(\mathfrak g)$ as
\begin{gather*}
	\exp^{\tr}(a)b:= b + a \tr b + \frac{1}{2!} a \tr (a \tr b) + \frac{1}{3!} a \tr (a \tr (a \tr b))+ \cdots.
\end{gather*}
\end{Definition}

From the identity $A \tr (B \tr C) = (A_{(1)}(A_{(2)} \tr B)) \tr C$, which holds for all $A, B, C\in\mathcal U(\mathfrak g)$, see~\cite{EFLMK}, it  follows immediately that in $\mathcal{U}(\mathfrak g)$:
\begin{gather*}
	\exp^{\tr}(a)b = \exp^{*}(a) \tr b.
\end{gather*}

We now consider these results in the context of a post-Lie algebra which is def\/ined in terms of an $R$-matrix $\pi_+ \in\operatorname{End}(\mathfrak g)$ satisfying identity~\eqref{mYBE}. First recall that any element $v \in \mathfrak g$ may be decomposed: $v = \pi_-(v) + \pi_+(v) =: v_- + v_+$, and let $(\mathfrak g, [\cdot,\cdot], \tr)$ be the corresponding post-Lie algebra with $a \triangleright b := - [\pi_+(a),b]$.
Then:
\begin{Proposition}
\label{prop:Ad-exp}
For every $v,w \in \mathfrak g$ the following equality
\begin{gather*}
	\exp^*(v) \tr w 	= \exp\big({-}\pi_+(v)\big)w\exp\big(\pi_+(v)\big)
				= \exp\big({-}\operatorname{ad}_{\pi_+(v)}\big)w
\end{gather*}
holds in $\mathcal{U}(\mathfrak g)$.
\end{Proposition}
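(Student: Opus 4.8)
The plan is to work directly from the definitions of the post-Lie product $a \tr b := -[\pi_+(a),b]$ and of the extended product on $\mathcal{U}(\mathfrak g)$. First I would recall from the preceding definition and the identity $A \tr (B \tr C) = (A_{(1)}(A_{(2)} \tr B)) \tr C$ that $\exp^*(v) \tr w = \exp^{\tr}(v)w$, so that the left-hand side equals $\sum_{n\ge 0} \frac{1}{n!}\, v \tr (v \tr (\cdots \tr w))$ with $n$ factors of $v$. The crucial observation is that, for $v, w \in \mathfrak g$ (both primitive), the post-Lie product reduces to a left multiplication by a Lie-algebra element: $v \tr w = -[\pi_+(v),w] = -\operatorname{ad}_{\pi_+(v)}(w)$, and since $w$ stays in $\mathfrak g$ after each application (as $\mathfrak g$ is closed under the bracket), iterating is legitimate and gives $v \tr (v \tr \cdots \tr w) = (-\operatorname{ad}_{\pi_+(v)})^n(w)$.

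Given that, summing the series yields $\exp^*(v)\tr w = \sum_{n\ge0}\frac{1}{n!}(-\operatorname{ad}_{\pi_+(v)})^n(w) = \exp(-\operatorname{ad}_{\pi_+(v)})w$, which is the second equality. For the first equality I would invoke the standard identity in $\mathcal{U}(\mathfrak g)$ relating the adjoint action to conjugation: for any $x \in \mathfrak g$ one has $\exp(-x)\, w\, \exp(x) = \exp(-\operatorname{ad}_x)w$ inside $\mathcal{U}(\mathfrak g)$ (with its concatenation product $\cdot$); applying this with $x = \pi_+(v)$ finishes the proof.

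One subtlety worth spelling out is that the extension of $\tr$ to $\mathcal{U}(\mathfrak g)$ is a priori only guaranteed to act nicely when the left argument is an arbitrary element of $\mathcal{U}(\mathfrak g)$; here I only need the case where the left slot runs over powers built from a single primitive $v$, and the identity $v \tr w \in \mathfrak g$ for $w \in \mathfrak g$ is immediate from $\eqref{def:RBpostLie}$. The only genuine care needed is the convergence/completion issue — the series defining $\exp^{\tr}(v)$ and $\exp(-\operatorname{ad}_{\pi_+(v)})$ must be interpreted in the completed algebras of Remark~\ref{rmk:rem}, but this is exactly the setting already fixed in item~(\ref{rmk:2}) there, so no new hypothesis is required.

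I expect the main (and really only) obstacle to be purely bookkeeping: making sure the iterated post-Lie product on the enveloping algebra genuinely collapses to iterated $\operatorname{ad}$ at each stage, i.e., that no ``higher'' terms of $\mathcal{U}(\mathfrak g)$ are generated when one applies $\tr$ repeatedly to an element of $\mathfrak g$. This is settled by the observation that $v\tr(-)$ preserves $\mathfrak g$, so the whole computation never leaves $\mathfrak g$ until the final conjugation identity is invoked; once that is noted, the proof is a two-line series manipulation.
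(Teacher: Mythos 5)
Your proposal is correct and follows essentially the same route as the paper: identify $\exp^*(v)\tr w$ with the ordered exponential $\exp^{\tr}(v)w$, observe that each application of $v\tr(-)$ on an element of $\mathfrak g$ is just $-\operatorname{ad}_{\pi_+(v)}$ and stays in $\mathfrak g$, sum the series to $\exp(-\operatorname{ad}_{\pi_+(v)})w$, and convert to the conjugation $\exp(-\pi_+(v))\,w\,\exp(\pi_+(v))$ in $\mathcal U(\mathfrak g)$. The paper's proof is exactly this series computation, stated more tersely.
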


\begin{proof}
Note that this result implies that $\exp^*(v) \tr w \in \mathfrak g$ for $v,w \in \mathfrak g$.
\begin{gather*}
	\exp^\tr(v)w 	 = w + v \tr w + \frac{1}{2} v \tr (v \tr w) + \cdots
				 = w - [\pi_+(v),w]  + \frac{1}{2} [\pi_+(v),[ \pi_+(v), w]] - \cdots \\
\hphantom{\exp^\tr(v)w}{} =  \exp\big({-}\pi_+(v)\big) w \exp\big(\pi_+(v)\big). \tag*{\qed}
\end{gather*}
\renewcommand{\qed}{}
\end{proof}

The next proposition is a natural factorization statement for the generators of the group $G^*\subset\mathcal U(\mathfrak g)$, see item~(\ref{rmk:2}) in Remark~\ref{rmk:rem}.

\begin{Proposition} \label{prop:star-fact}
For each $v \in {\mathfrak g}$ the following factorization holds:
\begin{gather}
\label{eq:star-fact}
	\exp^*(v) = \exp(v_-)\exp(v_+).
\end{gather}
\end{Proposition}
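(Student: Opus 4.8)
The plan is to prove the factorization $\exp^*(v) = \exp(v_-)\exp(v_+)$ by exploiting the Hopf algebra isomorphism from Theorem~\ref{thm:OudomGuin} together with Proposition~\ref{prop:Ad-exp}. First I would recall that the group-like element $\exp^*(v)$ in $\mathcal U_*(\mathfrak g)$ can be computed by its action via $\tr$ on the unit, or more directly by understanding how $*$-powers decompose. The key structural fact is that $(\mathcal U(\mathfrak g),*,\Delta)$ is isomorphic to $\mathcal U(\bar{\mathfrak g})$, and under this isomorphism the element $v\in\mathfrak g$ maps to $v$ itself (both sit inside the common underlying vector space, and the isomorphism is the identity on primitives). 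So $\exp^* v$ corresponds to $\exp^{\bar\cdot} v$ in $\mathcal U(\bar{\mathfrak g})$, and since $\bar{\mathfrak g} = \mathfrak g_+\oplus\mathfrak g_-$ as a Lie algebra (with $\llbracket\cdot,\cdot\rrbracket$ restricting to $\pm$ the original bracket on each summand, per the theorem of~\cite{STS} quoted above), one expects a Baker--Campbell--Hausdorff type factorization in $\mathcal U(\bar{\mathfrak g})$.

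A cleaner route, which I would prefer to carry out explicitly, is to verify the identity directly in $\mathcal U(\mathfrak g)$ using the definition of $*$. The idea is to show that the right-hand side $\exp(v_-)\exp(v_+)$, which is a product of group-like elements in $(\mathcal U(\mathfrak g),\cdot,\Delta)$ and hence group-like, satisfies the same defining recursion as $\exp^* v$. Concretely, write $g_+ := \exp(v_+)$ and $g_- := \exp(v_-)$. One computes, using $A*B = A_{(1)}(A_{(2)}\tr B)$ and the fact that $a\tr b = -[\pi_+(a),b]$ extends so that $w\tr(-)$ for $w\in\mathfrak g_-$ acts trivially (since $\pi_+(v_-) = 0$): elements of $\mathfrak g_-$ are $*$-primitive and $*$-commute appropriately with the $\tr$-action, so the $*$-exponential of $v = v_- + v_+$ should split. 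Precisely, I would show $\exp^*(v_- + v_+) = \exp^*(v_-) * \exp^*(v_+)$ by checking that $v_-$ and $v_+$ generate commuting one-parameter subgroups only after accounting for $\tr$; then I would show $\exp^*(v_-) = \exp(v_-)$ because $v_-\tr(-) = -[\pi_+(v_-),-] = 0$, making $*$ agree with $\cdot$ on the subalgebra generated by $v_-$; and finally $\exp^*(v_+) = \exp(v_+)$ using Proposition~\ref{prop:Ad-exp} with the observation that $\pi_+(v_+) = v_+$, so the $\tr$-correction again collapses on the subalgebra generated by $v_+$.

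Let me make the last two points sharper, since they carry the argument. For $w\in\mathfrak g_- = \pi_-(\mathfrak g)$ we have $\pi_+(w) = 0$, hence $w\tr b = -[\pi_+(w),b] = 0$ for all $b$; by the extension property of $\tr$ to $\mathcal U(\mathfrak g)$ this forces $A\tr B = \epsilon(A)B$ whenever $A$ lies in the subalgebra generated by $\mathfrak g_-$, so $A*B = A_{(1)}\epsilon(A_{(2)})B = AB$ there. Thus $\exp^*(v_-) = \exp^{\cdot}(v_-) = \exp(v_-)$. For the $+$ part, Proposition~\ref{prop:Ad-exp} gives $\exp^*(v_+)\tr w = \exp(-\pi_+(v_+))\,w\,\exp(\pi_+(v_+)) = \exp(-v_+)w\exp(v_+)$; combined with $\exp^*(v_+) = (\exp^* v_+)_{(1)}\bigl((\exp^* v_+)_{(2)}\tr\un\bigr)$-type bookkeeping and grouplikeness one identifies $\exp^*(v_+)$ with $\exp(v_+)$ as elements of $\mathcal U(\mathfrak g)$. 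Finally, assembling $\exp^*(v) = \exp^*(v_-)*\exp^*(v_+) = \exp(v_-)*\exp(v_+) = \exp(v_-)\exp(v_+)$, where the last equality again uses that $A*B = AB$ when $A$ is in the subalgebra generated by $\mathfrak g_-$ (with $A = \exp(v_-)$).

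The main obstacle I anticipate is the very first step of the second route: justifying $\exp^*(v_- + v_+) = \exp^*(v_-)*\exp^*(v_+)$. This is a $*$-BCH statement and is \emph{not} automatic — it requires that $v_-$ and $v_+$ behave like commuting elements with respect to the $*$-product's effective bracket, which is $\llbracket\cdot,\cdot\rrbracket$. One must either invoke the explicit form $\llbracket x,y\rrbracket = x\tr y - y\tr x + [x,y]$ and check $\llbracket v_-, v_+\rrbracket$ lands where the ordered-exponential splitting tolerates it, or — more robustly — transport the whole question to $\mathcal U(\bar{\mathfrak g})$ via the isomorphism of Theorem~\ref{thm:OudomGuin}, where it becomes the standard statement that for a Lie algebra direct sum $\bar{\mathfrak g} = \mathfrak g_+ \oplus \mathfrak g_-$ one does \emph{not} generally get $\exp(x+y) = \exp(y_-)\exp(x_+)$ unless one is careful: in fact the correct reading is that $\exp^*(v)$ \emph{defines} the factorization, i.e., $\exp(v_-)$ and $\exp(v_+)$ are the unique group-like factors in $\bar G$ matching $\exp^{\bar\cdot}(v)$ under the local diffeomorphism $\bar G \simeq G_+\times G_-$. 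I would lean on the $\mathcal U(\bar{\mathfrak g})$ picture to make this rigorous, using that $\pi_\pm$ are Lie algebra projections onto $\mathfrak g_\pm$ and that the enveloping-algebra isomorphism intertwines the relevant exponentials.
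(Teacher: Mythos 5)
Your proposal stalls exactly at the step you yourself flag, and the fallback you offer does not close it. The splitting $\exp^*(v_-+v_+)=\exp^*(v_-)*\exp^*(v_+)$ does have a clean justification in the setting you are implicitly using (complementary projectors $\pi_\pm$): for primitives one has $x*y=xy+x\tr y$, hence $x*y-y*x=\llbracket x,y\rrbracket$ by \eqref{postLie3}, and by the second expression in \eqref{doublebracket}, $\llbracket v_-,v_+\rrbracket=[\pi_-(v_-),\pi_-(v_+)]-[\pi_+(v_-),\pi_+(v_+)]=0$ since $\pi_-(v_+)=0=\pi_+(v_-)$; so $v_-$ and $v_+$ genuinely $*$-commute and the binomial theorem in $(\mathcal U(\mathfrak g),*)$ splits the $*$-exponential. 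You never verify this; instead you retreat to ``$\exp^*(v)$ defines the factorization'' via the local diffeomorphism $\bar G\simeq G_+\times G_-$. That argument cannot prove Proposition~\ref{prop:star-fact}: uniqueness of a factorization tells you that \emph{some} factors exist, not that they are $\exp(\pi_-(v))$ and $\exp(\pi_+(v))$ with no correction terms. Compare the ordinary exponential, where the unique factors of $\exp(tv)$ are $\exp(\pi_+(\chi(tv)))$ and $\exp(\pi_-(\chi(tv)))$ with the nontrivial $\BCH$-recursion \eqref{BCHrecursion1}--\eqref{chiFact1}; the whole content of \eqref{eq:star-fact} is that for $\exp^*$ no such correction appears, and this must be computed, not inferred from uniqueness. (Your appeal to Proposition~\ref{prop:Ad-exp} to get $\exp^*(v_+)=\exp(v_+)$ is also indirect; the relevant fact is simply $v_+\tr v_+=0$, giving $v_+^{*n}=v_+^n$.)

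Beyond this gap, note that every step you do carry out ($\pi_+(v_-)=0$, $\pi_+(v_+)=v_+$, hence $\exp^*(v_\pm)=\exp(v_\pm)$ and $\exp(v_-)*A=\exp(v_-)A$) uses idempotency of $\pi_+$, whereas the paper proves Proposition~\ref{prop:star-fact} for a general solution of \eqref{mYBE} and reserves $\exp^*(v_\pm)=\exp(v_\pm)$ for the projector case in the subsequent Corollary. The paper's proof is a direct induction showing $\frac{1}{n!}v^{*n}=\sum_{k+l=n}\frac{1}{k!\,l!}v_-^kv_+^l$, using only $v\tr v_+=0$ and the telescoping identity $v\tr v_-^k=-v_+v_-^k+v_-^kv_+$, with no projector assumption; the Remark that follows gives an alternative ODE-uniqueness proof, and the Corollary to Proposition~\ref{prop:isoal} gives the Hopf-algebraic derivation (via the map $F$ of Def\/inition~\ref{def:HF} applied to the group-like $\exp^{\cdot}(v)$) that your ``transport to $\mathcal U(\bar{\mathfrak g})$'' idea is groping toward -- but that route needs the antipode twist in $F$ and its $*$-morphism property, not a group-level uniqueness statement. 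So as written there is a genuine gap at the splitting step, and even once repaired with $\llbracket v_-,v_+\rrbracket=0$ your argument only covers the case of a projector $R$-matrix.
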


\begin{proof}
Recall that for $v\in {\mathfrak g}  \hookrightarrow \mathcal{U}({\mathfrak g})$, one has $v \tr v = -[v_+,v_-]$, where $v_\pm := \pi_\pm(v)$, and $v \tr v_+ = 0$. Next we show inductively that $\frac{1}{n!} v^{*n} = \sum\limits_{k+l=n} \frac{1}{k!}\frac{1}{l!} v_-^k v_+^l$. For $n=2$ one f\/inds that:
\begin{gather*}
	\frac{1}{2} v * v 	= \frac{1}{2} vv + \frac{1}{2} v \tr v
				= \frac{1}{2} vv - \frac{1}{2}  [v_+,v_-]
				= \frac{1}{2} v_+v_+ + \frac{1}{2} v_-v_- + v_-v_+.
\end{gather*}
For $n>2$ we have
\begin{gather*}
	 \frac{1}{n!} v^{*n} = \frac{1}{n (n-1)!} v*v^{*n-1}
				     =  \frac{1}{n} v * \sum_{k=0}^{n-1} \frac{1}{k!}\frac{1}{(n-1-k)!} v_-^k v_+^{n-1-k}\\
\hphantom{\frac{1}{n!} v^{*n}}{}
		 = \frac{1}{n}\left( 	\sum_{k=0}^{n-1} \frac{1}{k!} \frac{1}{(n-1-k)!} v v_-^k v_+^{n-1-k} +
						\sum_{k=0}^{n-1} \frac{1}{k!} \frac{1}{(n-1-k)!} v \tr \big(v_-^k v_+^{n-1-k} \big)\right)\\
\hphantom{\frac{1}{n!} v^{*n}}{}
		= \frac{1}{n}\left( 	\sum_{k=0}^{n-1} \frac{1}{k!} \frac{1}{(n-1-k)!} \big(v_-^{k+1} v_+^{n-1-k} + v_+ v_-^{k} v_+^{n-1-k}\big) \right.\\
\left. \hphantom{\frac{1}{n!} v^{*n}=}{}
+
					      	\sum_{k=1}^{n-1} \frac{1}{k!} \frac{1}{(n-1-k)!} \big(v \tr v_-^{k}\big) v_+^{n-1-k}\right),
\end{gather*}
where we used that $v \tr v_+ =0$. Furthermore
\begin{gather*}
	\big(v \tr v_-^{k}\big) = (v \tr v_-)v_-^{k-1} + \sum_{m=1}^{k-1} v_-^m (v \tr v_-)v_-^{k-m-1}\\
\hphantom{\big(v \tr v_-^{k}\big)}{}
			  = -[v_+, v_-]v_-^{k-1} - \sum_{m=1}^{k-1} v_-^m ([v_+, v_-])v_-^{k-m-1}\\
\hphantom{\big(v \tr v_-^{k}\big)}{}
= -v_+v_-^{k} + v_- v_+  v_-^{k-1}  - v_- v_+v_-^{k-1} + v_-^2 v_+ v_-^{k-2}
			  - \sum_{m=2}^{k-1} v_-^m ([v_+, v_-])v_-^{k-m-1}\\
\hphantom{\big(v \tr v_-^{k}\big)}{}
= -v_+v_-^{k} + v_-^{k}v_+,
\end{gather*}
which implies that
\begin{gather*}
	\frac{1}{n!} v^{*n}  =  \frac{1}{n}\left( \sum_{k=0}^{n-1} \frac{1}{k!} \frac{1}{(n-1-k)!}
						\big(v_-^{k+1} v_+^{n-1-k} + v_+ v_-^{k} v_+^{n-1-k}\big) \right.\\
	\left.	\hphantom{\frac{1}{n!} v^{*n}  = }{}
+ \sum_{k=0}^{n-1} \frac{1}{k!} \frac{1}{(n-1-k)!} \big({-}v_+v_-^{k} + v_-^{k}v_+\big) v_+^{n-1-k}\right)\\
\hphantom{\frac{1}{n!} v^{*n} }{}
= \frac{1}{n}\left( \sum_{k=1}^{n} \frac{1}{(k-1)!} \frac{1}{(n-k)!}
						v_-^{k} v_+^{n-k} +
						\sum_{k=0}^{n-1} \frac{1}{k!} \frac{1}{(n-1-k)!} v_-^{k}v_+^{n-k}\right)\\	
\hphantom{\frac{1}{n!} v^{*n} }{}
= \frac{1}{n!}v_-^{n} + \frac{1}{n!}v_+^{n} + \frac{1}{n}\left( \sum_{k=1}^{n-1} \frac{1}{(k-1)!} \frac{1}{(n-k)!}
						v_-^{k} v_+^{n-k} +
						\sum_{k=1}^{n-1} \frac{1}{k!} \frac{1}{(n-1-k)!} v_-^{k}v_+^{n-k}\right)\\
\hphantom{\frac{1}{n!} v^{*n} }{}
= \frac{1}{n!}v_-^{n} + \frac{1}{n!}v_+^{n} + 	
					\sum_{k=1}^{n-1} \frac{1}{k!} \frac{1}{(n-k)!} v_-^{k}v_+^{n-k}.	\tag*{\qed}							
\end{gather*}
\renewcommand{\qed}{}
\end{proof}	

\begin{Remark} \quad
\begin{enumerate}[a)]\itemsep=0pt

\item[a)] Another way to prove equality~\eqref{eq:star-fact} is to show that both sides solve the same initial value problem. For this we take a local point of view by assuming a small enough~$t$ such that $\exp^*(tv)$ and $\exp(tv_-)\exp(tv_+)$ both lie in a suf\/f\/iciently small neighborhood of the unit of the Lie group~$G$ corresponding to the Lie algebra~$\mathfrak g$. Then $y_1(t):=\exp^*(tv)$ and $y_2(t):= \exp(tv_-)\exp(tv_+)$ are solutions of
\begin{gather*}
	\dot{y}(t)=y(t)\big( \exp(-\pi_+(tv)) v \exp(\pi_+(tv))\big), \qquad y(0)=1.
\end{gather*}
Indeed,
\begin{gather*}
	\dot{y}_1(t)  =  y_1(t) * v
	 =  y_1(t) (y_1(t) \tr v )
	= y_1(t)\big(\exp^*(tv) \tr v\big)\\
\hphantom{\dot{y}_1(t)}{}
	= y_1(t)\big( \exp (-\pi_+(tv) ) v \exp (\pi_+(tv) )\big),	
\end{gather*}	
and
\begin{gather*}
	\dot{y}_2(t) 	 =   \exp(tv_-)v_-\exp(tv_+) + \exp(tv_-)v_+\exp(tv_+)\\
\hphantom{\dot{y}_2(t)}{} =  \exp(tv_-)v\exp(tv_+)
				= y_2(t)\big(  \exp (-\pi_+(tv) ) v \exp (\pi_+(tv) ) \big).
\end{gather*}
From which the statement follows by uniqueness of the solution.

\item[b)] Returning to item~\eqref{rmk:3} of Remark~\ref{rmk:rem}, we f\/ind that~\eqref{eq:star-fact} implies
\begin{gather*}
	\exp^*(v) 	= \exp(v_-)\exp(v_+)= \exp\big(\BCH(v_-, v_+)\big).
\end{gather*}
In light of Proposition~3.11 in~\cite{EFLMK}, which says that for $v \in \mathfrak g$ and suf\/f\/iciently small enough $t$
\begin{gather*}
	\exp^*(tv)=\exp (\theta(tv) ),
\end{gather*}
where $\dot{\theta}(tv)={\rm{dexp}}^{-1}_{\theta(tv)}(\exp\big(\theta(tv)\big) \tr v)$, we see that $\theta(tv)= \BCH(tv_-, tv_+)$.
\end{enumerate}
\end{Remark}

If the $R$-matrix corresponds to a Lie algebra that splits into a direct sum of vector subspaces, $\mathfrak g = \mathfrak g_+ \oplus \mathfrak g_-$, then in addition to $v \tr v_+ = 0$, we have that $v_- \tr v_- = v_+ \tr v_+ = 0$. This implies the next result.

\begin{Corollary}
In the case of an $R$-matrix $\pi_+$ which is a projector on~${\mathfrak g}$, we find that
\begin{gather*}
	\exp^*(v_\pm) = \exp(v_\pm).
\end{gather*}
\end{Corollary}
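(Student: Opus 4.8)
The plan is to deduce the statement directly from Proposition~\ref{prop:star-fact}. Write $v_{\pm}=\pi_{\pm}(v)$ and note first that, since $\pi_{+}$ is a projector, $\pi_{+}^{2}=\pi_{+}$, hence $\pi_{-}^{2}=\pi_{-}$ and $\pi_{+}\pi_{-}=\pi_{-}\pi_{+}=0$; in particular $\mathfrak g=\mathfrak g_{+}\oplus\mathfrak g_{-}$ as vector spaces. Applying $\pi_{\pm}$ to $v_{\pm}$ then gives $\pi_{+}(v_{+})=v_{+}$, $\pi_{-}(v_{+})=0$, $\pi_{-}(v_{-})=v_{-}$ and $\pi_{+}(v_{-})=0$. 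Now apply Proposition~\ref{prop:star-fact} not to $v$ but to the element $v_{+}\in\mathfrak g$: its $\pi_{\mp}$-decomposition is $v_{+}=0+v_{+}$, so $\exp^{*}(v_{+})=\exp(0)\exp(v_{+})=\exp(v_{+})$. The same argument applied to $v_{-}\in\mathfrak g$ yields $\exp^{*}(v_{-})=\exp(v_{-})\exp(0)=\exp(v_{-})$.

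Alternatively, and more in the spirit of the remark preceding the statement, one can argue by a direct computation in $\mathcal U(\mathfrak g)$. Because $\pi_{+}$ is a projector, $a\tr b=-[\pi_{+}(a),b]$ specialises to $v_{+}\tr v_{+}=-[v_{+},v_{+}]=0$, and $v_{-}\tr b=-[\pi_{+}(v_{-}),b]=-[0,b]=0$ for every $b\in\mathfrak g$. As used in the proof of Proposition~\ref{prop:star-fact}, for $x\in\mathfrak g$ the operator $x\tr(-)$ extends to $\mathcal U(\mathfrak g)$ as a derivation of the concatenation product with $x\tr\un=0$; a derivation that vanishes on the algebra generators $\mathfrak g$ vanishes identically, so $v_{-}\tr(-)\equiv 0$ on $\mathcal U(\mathfrak g)$, and $v_{+}\tr v_{+}^{\,k}=\sum_{i=0}^{k-1}v_{+}^{\,i}(v_{+}\tr v_{+})v_{+}^{\,k-1-i}=0$ for all $k$. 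Plugging this into \eqref{def:preLie} gives $v_{\pm}*B=v_{\pm}B+v_{\pm}\tr B=v_{\pm}B$ whenever $B$ is a power of $v_{\pm}$, so by an immediate induction $v_{\pm}^{*n}=v_{\pm}^{n}$ for all $n$; summing the series yields $\exp^{*}(v_{\pm})=\exp(v_{\pm})$.

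There is essentially no obstacle here: the whole content is the idempotency of $\pi_{+}$, which kills the cross-components and turns the splitting of $\mathfrak g$ into a splitting of Lie subalgebras. The one point deserving a line of care is that the identities $v_{\pm}\tr v_{\pm}=0$, which hold a priori only inside $\mathfrak g$, must be propagated to all of $\mathcal U(\mathfrak g)$ — this is exactly the Leibniz property of $x\tr(-)$ quoted above. Everything is read off in the completed Hopf algebras of Remark~\ref{rmk:rem}, and since the equalities $v_{\pm}^{*n}=v_{\pm}^{n}$ hold degree by degree, no additional convergence discussion is needed.
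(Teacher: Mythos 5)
Your proposal is correct and matches the paper's (implicit) argument: the paper simply notes that for a projector $\pi_+$ one has $v_-\tr v_-=v_+\tr v_+=0$, which is exactly your second computation showing $v_\pm^{*n}=v_\pm^{n}$, while your first route (applying Proposition~\ref{prop:star-fact} to $v_\pm$ with trivial decomposition) is an equally immediate variant of the same idea. No gaps; the Leibniz-property point you flag is indeed the only thing to check.
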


The result in Proposition~\ref{prop:star-fact} has a Hopf algebraic formulation. Recall that the universal property of $\mathcal{U}(\bar{\mathfrak{g}})$ implies that the $R$-matrices ${\rm{r}}_+:= - \pi_+$  and ${\rm{r}}_-:= \pi_-$ become unital algebra morphism from $\mathcal{U}(\bar{\mathfrak{g}})$ to~$\mathcal{U}(\mathfrak g)$.

\begin{Definition}\label{def:HF}
Let $F\colon \mathcal{U}(\bar{\mathfrak{g}}) \to \mathcal{U}(\mathfrak{g})$ be the map def\/ined by
\begin{gather}
\label{Hopf-fact1}
	F:= \mu \circ (\operatorname{id} \otimes S) \circ ({\rm{r}}_- \otimes {\rm{r}}_+)   \circ \Delta,
\end{gather}
where $\mu$ denotes the product in $\mathcal{U}(\mathfrak{g})$.
\end{Definition}

\begin{Remark}
For $X\in\mathcal U(\mathfrak g)$, using the Sweedler notation, one can write
\begin{gather*}
	F(X) = (-1)^{|X_{(2)}|}{X_-}_{(1)}S\big({X_+}_{(2)}\big),
\end{gather*}
where $|X|$ denotes the degree of a homogenous $X \in \mathcal U(\mathfrak g)$.
\end{Remark}

We can now prove the following result.

\begin{Proposition}\label{prop:isoal}
The map $F$ is an algebra morphism from $\mathcal{U}(\bar{\mathfrak{g}})$ to $\mathcal{U}_{*}(\mathfrak{g})$, i.e., it is a linear map such that:
\begin{gather*}
	F(x_1 \cdots   x_n) = F(x_1) * \cdots * F(x_n),
\end{gather*}
for all monomials $x_1 \cdots  x_n\in\mathcal U(\bar{\mathfrak g})$.
\end{Proposition}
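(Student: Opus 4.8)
The plan is to establish the multiplicativity of $F$ first for group-like arguments --- where it follows from Proposition~\ref{prop:star-fact} and the classical $\BCH$-formula --- and then to propagate it to all monomials by linearity. I will use throughout that, by the universal property, ${\rm{r}}_-=\pi_-$ and ${\rm{r}}_+=-\pi_+$ are unital algebra morphisms $\mathcal U(\bar{\mathfrak g})\to\mathcal U(\mathfrak g)$, and that the antipode of $\mathcal U(\mathfrak g)$ satisfies $S(\exp w)=\exp(-w)$ for $w\in\mathfrak g$.

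First I would evaluate $F$ on the group-like generators of $\overline{G}$. Since $\exp^{\overline{\cdot}}(v)$ is group-like in $\mathcal U(\bar{\mathfrak g})$, one has $\Delta\big(\exp^{\overline{\cdot}}(v)\big)=\exp^{\overline{\cdot}}(v)\otimes\exp^{\overline{\cdot}}(v)$, so that
\begin{gather*}
	F\big(\exp^{\overline{\cdot}}(v)\big)
	= {\rm{r}}_-\big(\exp^{\overline{\cdot}}(v)\big)\, S\big({\rm{r}}_+\big(\exp^{\overline{\cdot}}(v)\big)\big)\\
	= \exp\big(\pi_-(v)\big)\, S\big(\exp\big({-}\pi_+(v)\big)\big)
	= \exp(v_-)\exp(v_+),
\end{gather*}
which by Proposition~\ref{prop:star-fact} equals $\exp^*(v)$. (A direct computation on $v\in\mathfrak g$ likewise gives $F(v)=\pi_-(v)+\pi_+(v)=v$, so $F$ restricts to the identity on $\mathfrak g$; thus the claimed identity is equivalent to $F$ being an algebra morphism $\mathcal U(\bar{\mathfrak g})\to\mathcal U_*(\mathfrak g)$.)

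Next I would reduce to checking $F(gh)=F(g)*F(h)$ for single group-like elements $g=\exp^{\overline{\cdot}}(v)$ and $h=\exp^{\overline{\cdot}}(w)$. Working degree by degree in the filtration of $\mathcal U(\bar{\mathfrak g})$, the elements $\exp^{\overline{\cdot}}(v)$, $v\in\bar{\mathfrak g}$, span $\mathcal U(\bar{\mathfrak g})$: one isolates homogeneous components by the substitution $v\mapsto tv$ and a Vandermonde argument, obtains all symmetric powers by polarization, and then reaches every PBW monomial using $\bar{\mathfrak g}\subset\operatorname{span}\{\exp^{\overline{\cdot}}(v)\}$. Since $F$ is linear and $*$ bilinear, multiplicativity on this spanning set yields the general case $F(x_1\cdots x_n)=F(x_1)*\cdots *F(x_n)$. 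For the group-like case, observe that $gh$ and $F(g)*F(h)=\exp^*(v)*\exp^*(w)$ are the exponentials --- in $\mathcal U(\bar{\mathfrak g})$ and in the associative algebra $(\mathcal U_*(\mathfrak g),*)$, respectively --- of one and the same element of $\bar{\mathfrak g}$, namely the $\BCH$-series in $v$ and $w$ formed with the bracket $\llbracket\cdot,\cdot\rrbracket$; here the assertion for the $*$-algebra uses that $x*y-y*x=\llbracket x,y\rrbracket$ on $\mathfrak g$, which is immediate from \eqref{def:preLie} and \eqref{postLie3}. Feeding this common element into the computation of the previous paragraph identifies $F(gh)$ with $F(g)*F(h)$.

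The point I expect to require the most care is the spanning statement underlying this reduction. One may sidestep it by instead verifying directly that $F$ is a coalgebra morphism --- being the convolution of the coalgebra morphisms ${\rm{r}}_-$ and $S\circ{\rm{r}}_+$ out of the \emph{cocommutative} coalgebra $\mathcal U(\bar{\mathfrak g})$ --- and then identifying $F$ with the Hopf-algebra isomorphism $\mathcal U(\bar{\mathfrak g})\cong\mathcal U_*(\mathfrak g)$ of Theorem~\ref{thm:OudomGuin}, using that two coalgebra morphisms agreeing on all group-like elements must coincide. Either way, the content of the proposition is that $F$ is precisely the canonical identification of $\mathcal U(\bar{\mathfrak g})$ with $\mathcal U_*(\mathfrak g)$ induced by $\operatorname{id}_{\mathfrak g}$.
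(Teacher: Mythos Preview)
Your strategy is quite different from the paper's. The paper argues by direct induction on the length of the monomial $X=x_1\cdot X'\in\mathcal U(\bar{\mathfrak g})$: after checking $F(x)=x$ and $F(x\cdot y)=x*y$ for $x,y\in\mathfrak g$, it expands $F(x_1\cdot X')$ from the definition~\eqref{Hopf-fact1}, pulls the primitive factor $x_1$ through the convolution, and obtains ${x_1}_-\,F(X')+F(X')\,{x_1}_+=x_1*F(X')$. The whole computation stays in the uncompleted enveloping algebras and uses only the Hopf structure together with $x*y=xy-[\pi_+(x),y]$. Your route instead passes to completions in order to exploit Proposition~\ref{prop:star-fact} and the $\BCH$-formula on group-like elements, and then returns to monomials by a density argument; this is more conceptual and makes the identification of $F$ with the canonical isomorphism of Theorem~\ref{thm:OudomGuin} transparent, at the cost of having to work formally. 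In particular, continuity of $F$ and of the bilinear defect $F(ab)-F(a)*F(b)$ in the filtration topology is needed both for the $\BCH$ step (applying the first paragraph to the infinite series $\BCH_{\llbracket\cdot,\cdot\rrbracket}(v,w)$) and for the descent to monomials, and should be stated rather than left implicit in ``working degree by degree.''

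One point does need correcting. Your proposed shortcut, ``two coalgebra morphisms agreeing on all group-like elements must coincide,'' is not an independent principle you can invoke. In the uncompleted $\mathcal U(\bar{\mathfrak g})$ the only group-like element is~$\un$, so the hypothesis is vacuous; and there exist coalgebra endomorphisms of an enveloping algebra fixing all primitives yet different from the identity (already for $\mathfrak g$ one-dimensional: exponentiate the locally nilpotent coderivation $x^n\mapsto\binom{n}{2}x^{n-1}$). In the completion the statement does hold, but only because the elements $\exp^{\overline\cdot}(tv)$ are dense and continuous maps are determined on dense sets --- which is precisely the spanning argument you were hoping to avoid. So keep your first route and make its filtration bookkeeping explicit; the alternative ending does not actually shorten the proof.
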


\begin{proof}
First observe that since every $x \in \bar{\mathfrak{g}}$ is a primitive element in $\mathcal{U}(\bar{\mathfrak{g}})$,
\begin{gather*}
	F(x) = {\rm{r}}_-(x) - {\rm{r}}_+(x) = x_- + x_+ =x.
\end{gather*}
Then note that for every $x,y \in \mathfrak{g}$ one f\/inds:
\begin{gather*}
	F(x \cdot y) = x_-y_- + y_+x_+ + x_-y_+ + y_-x_+ = xy - [\pi_+(x),y] = x*y=F(x)*F(y).
\end{gather*}
Let $X:=x_1 \cdot x_2 \cdots  x_n = x_1 \cdot X' \in \mathcal{U}(\bar{\mathfrak{g}})$, $x_i \in \mathfrak{g}$, and calculate
\begin{gather*}
	F(X) =  \mu \circ (\operatorname{id} \otimes S) \circ ({\rm{r}}_- \otimes {\rm{r}}_+) \circ \Delta(x_1 \cdot X' )\\
\hphantom{F(X)}{}
=  \mu \circ (\operatorname{id} \otimes S) \circ ({\rm{r}}_- \otimes {\rm{r}}_+) \circ \Delta(x_1) \cdot \Delta_\cdot(X' )\\
\hphantom{F(X)}{}
=  \mu \circ ({x_1}_- \otimes \un) (\operatorname{id} \otimes S)
										\circ ({\rm{r}}_- \otimes {\rm{r}}_+) \circ \Delta(X' )\\
\hphantom{F(X)=}{}
  -\mu \circ (\operatorname{id} \otimes S) \circ (\un \otimes {x_1 }_+)
										 ({\rm{r}}_- \otimes {\rm{r}}_+) \circ \Delta(X' )\\ 							
\hphantom{F(X)}{}
=  \mu \circ ({x_1 }_- \otimes \un) (\operatorname{id} \otimes S)
										\circ ({\rm{r}}_- \otimes {\rm{r}}_+) \circ \Delta(X' )\\
\hphantom{F(X)=}{}  + \mu \circ \big( ((\operatorname{id} \otimes S) \circ  ({\rm{r}}_- \otimes {\rm{r}}_+)
										\circ \Delta(X')  ) (\un \otimes {x_1 }_+)\big)\\
\hphantom{F(X)}{}
= {x_1 }_- (x_2 * \cdots  * x_n) + (x_2 * \cdots  * x_n){x_1 }_+\\
\hphantom{F(X)}{}
= {x_1 }(x_2 * \cdots  * x_n) - {x_1 }_+ (x_2 * \cdots  * x_n) 	+ (x_2 * \cdots  * x_n){x_1}_+\\
\hphantom{F(X)}{}
= x_1*x_2 * \cdots  * x_n = F(x_1)*F(x_2) * \cdots  *F(x_n).	\tag*{\qed}						 		
\end{gather*}	
  \renewcommand{\qed}{}
\end{proof}

We conclude this section by observing that using the map $F$ of Def\/inition~\ref{def:HF}, one can recover the factorization described in Proposition \ref{prop:star-fact}.

\begin{Corollary}
For every $v\in\mathfrak g$,
\begin{gather*}
	F(\exp^\cdot (v)) = \exp(v_-)\exp(v_+)=\exp^*(v).
\end{gather*}
\end{Corollary}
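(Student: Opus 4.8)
The plan is to apply the algebra morphism $F\colon \mathcal U(\bar{\mathfrak g}) \to \mathcal U_*(\mathfrak g)$ from Proposition~\ref{prop:isoal} to the group-like element $\exp^\cdot(v) \in \mathcal U(\bar{\mathfrak g})$ and then invoke Proposition~\ref{prop:star-fact}. First I would expand $\exp^\cdot(v) = \sum_{n\geq 0} \frac{1}{n!} v^n$, where the powers $v^n$ are taken with respect to the product $\cdot$ of $\mathcal U(\bar{\mathfrak g})$. Since $F$ is linear and, by Proposition~\ref{prop:isoal}, sends a $\cdot$-product of elements of $\mathfrak g$ to the corresponding $*$-product, one gets $F(v^n) = F(v)^{*n} = v^{*n}$, using $F(x)=x$ for $x \in \mathfrak g$ as established in the proof of Proposition~\ref{prop:isoal}. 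Hence
\begin{gather*}
	F\big(\exp^\cdot(v)\big) = \sum_{n\geq 0}\frac{1}{n!} F(v^n) = \sum_{n\geq 0}\frac{1}{n!} v^{*n} = \exp^*(v).
\end{gather*}

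To obtain the middle expression $\exp(v_-)\exp(v_+)$ one simply quotes Proposition~\ref{prop:star-fact}, which already gives $\exp^*(v) = \exp(v_-)\exp(v_+)$; thus the chain of equalities closes. Alternatively, and perhaps more in the spirit of the remark preceding the corollary, one could compute $F(\exp^\cdot(v))$ directly from the defining formula~\eqref{Hopf-fact1}: since $\Delta(\exp^\cdot v) = \exp^\cdot v \otimes \exp^\cdot v$ (group-likeness in $\mathcal U(\bar{\mathfrak g})$), and ${\rm r}_\pm$ are algebra morphisms with $S$ the antipode, one finds $F(\exp^\cdot v) = {\rm r}_-(\exp^\cdot v)\, S\big({\rm r}_+(\exp^\cdot v)\big) = \exp\big({\rm r}_-(v)\big)\exp\big(-{\rm r}_+(v)\big)$. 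Now ${\rm r}_-(v) = \pi_-(v) = v_-$ and $-{\rm r}_+(v) = \pi_+(v) = v_+$ (recall ${\rm r}_+ = -\pi_+$), giving $\exp(v_-)\exp(v_+)$ directly. I would present this second computation as the main argument, since it makes the role of the $R$-matrices ${\rm r}_\pm$ transparent, and then note that Proposition~\ref{prop:star-fact} identifies this with $\exp^*(v)$.

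There is essentially no obstacle here: the corollary is a straightforward consequence of results already in hand, and the only point requiring a word of care is the passage $F(\exp^\cdot v) = \exp\big({\rm r}_-(v)\big)\exp\big(-{\rm r}_+(v)\big)$, which uses that an algebra morphism sends exponentials of Lie algebra elements to exponentials (valid in the completed Hopf algebras of Remark~\ref{rmk:rem}\,(\ref{rmk:2})) together with the fact that ${\rm r}_-(v)$ and $-{\rm r}_+(v)$ are the images in $\mathfrak g$ of the generator $v$. Convergence is not an issue in the formal/completed setting, and one may restrict to a neighborhood $U$ of $0 \in \mathfrak g$ as in Remark~\ref{rmk:rem}\,(\ref{rmk:3}) if a genuine analytic statement is wanted.
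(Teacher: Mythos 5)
Your main argument—expanding the definition \eqref{Hopf-fact1} on the group-like element $\exp^{\cdot}(v)$, using that ${\rm r}_\pm$ are algebra morphisms and $S$ inverts group-likes to get $\exp(v_-)\exp(v_+)$, and then quoting Proposition~\ref{prop:star-fact}—is exactly the paper's proof, which invokes only the definition of $F$ and group-likeness. Your alternative derivation of $F(\exp^{\cdot}(v))=\exp^*(v)$ via the morphism property of Proposition~\ref{prop:isoal} is also correct, but is not needed beyond this.
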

\begin{proof}
The result follows from the def\/inition of the map $F$ and from the property of $\exp^\cdot (v)$ being a group-like element in $\mathcal{U}(\bar{\mathfrak{g}})$.
\end{proof}

\begin{Remark}\label{rem:im}
We compare the map $F$ def\/ined in~(\ref{Hopf-fact1}) with formula~(1.19) in~\cite{RSTS}. The authors of~\cite{RSTS} work with a factorizable $r$-matrix, i.e., with an element $r \in \mathfrak g \otimes \mathfrak g$, satisfying the classical Yang--Baxter equation, and having a symmetric part def\/ining a linear isomorphism $I\colon \mathfrak g^*\rightarrow\mathfrak g$. The element $r$ permits one to def\/ine a Lie algebra struture on the dual vector space~$\mathfrak g^*$. Let~$\mathcal U(\mathfrak g^{\ast})$ be the corresponding universal enveloping algebra. Then in~\cite{RSTS} it is proven that the map
\begin{gather*}
	\mathcal I = \mu \circ (\operatorname{id} \otimes S) \circ (r_+\otimes r_-) \circ \Delta \colon \
	\mathcal U(\mathfrak g^*)\rightarrow\mathcal U(\mathfrak g)
\end{gather*}
is a linear isomorphism extending~$I$.

It is easy to show that the Hopf algebra $\mathcal U(\mathfrak g^{\ast})$ is isomorphic to $\mathcal U(\bar{\mathfrak g})$, where the $R$-matrix def\/ining the Lie algebra structure in $\bar{\mathfrak g}$ is obtained from $r$ and $I$ as $R:=\underline{r}\circ I$. Here $\underline{r}\colon \mathfrak g^{\ast}\rightarrow\mathfrak g$ is def\/ined by $\langle\beta,\underline{r}(\alpha)\rangle=\langle\alpha\otimes\beta,r\rangle$, for all $\alpha,\beta\in\mathfrak g^*$. After identifying $\mathcal U(\mathfrak g^*)$ with $\mathcal U(\bar{\mathfrak g})$ using this isomorphism, the map $\mathcal I$ becomes the map $F$, seen as a linear map between~$\mathcal U(\bar{\mathfrak g})$ and~$\mathcal U(\mathfrak g)$. Proposition~\ref{prop:isoal} states that, at the cost of trading the associative product of $\mathcal U(\mathfrak g)$ for the pro\-duct~$*$ def\/ined in~(\ref{def:preLie}), $F$ becomes an isomorphism of associative algebras.

It is also worth mentioning that in \cite{STS1}, the (inverse) of the linear isomorphism def\/ined in formula~\eqref{Hopf-fact1} was shown to restrict to an algebra homomorphism between the center of $\mathcal{U}(\mathfrak g)$ and $\mathcal{U}(\overline{\mathfrak g})$. This homomorphism was then used in the same paper to def\/ine a quantization of the symmetric algebra $\mathcal S(\overline{\mathfrak g})$, i.e., a linear map $q\colon \mathcal S(\overline{\mathfrak g})\rightarrow\mathcal{U}(\overline{\mathfrak g})$, compatible with the f\/iltrations and preserving the so called total symbol of the elements of~$\mathcal{U}(\overline{\mathfrak g})$, see \cite[p.~3414]{STS1}.
The quantization of~$\mathcal S(\overline{\mathfrak g})$ so obtained was then used to construct the quantum integral of motions for systems with linear Poisson brackets.

Regarding the aforementioned remark, we should emphasize that in this note our main interest lies in applying Magnus type formulas combined with post-Lie structures to solve classical Lax type equations, we will refrain from further commenting on possible applications  of the formalism here introduced in the context of the quantum dynamical systems.
\end{Remark}

\section{Post-Lie algebra and Lie-bracket f\/lows}
\label{sect:PostLieBracketFlow}

In this section the assumptions and notations made explicit in items~\eqref{rmk:1} and~\eqref{rmk:2} of Remark~\ref{rmk:rem} apply\footnote{Regarding item~\eqref{rmk:3} in Remark~\ref{rmk:rem} we note that in this section we have decided to work with formal series in the parameter~$t$. However, we could have taken a local point of view by assuming convergence of the $\BCH$-series together with small enough~$t$, which would imply that the exponential maps $\exp$ and~$\exp^*$ lie in suf\/f\/iciently small neighborhoods of the units of the groups~$G$ and $\bar{G}$, respectively, corresponding to the Lie algebras~$\mathfrak g$ and $\bar{\mathfrak{g}}$.}. We start by recalling the so-called $\BCH$-recursion \cite{EGM}. It is def\/ined for any element $x \in \mathfrak g$ through the recursion
\begin{gather}
\label{BCHrecursion1}
	\chi(t x) :=  t x + \overline{\BCH} (-\pi_+ (\chi(t x)),t x ) \in \mathcal U(\mathfrak g)[[t]].
\end{gather}

\begin{Remark}
Note that $\chi(t x)$ is a formal power series whose coef\/f\/icients are iterated commutators. For this reason it belongs to the vector subspace $\mathfrak g[[t]]\subset\mathcal U(\mathfrak g)[[t]]$, whose elements are formal power series with coef\/f\/icients in~$\mathcal U(\mathfrak g)$ of degree~$1$.
\end{Remark}

The f\/irst few terms of the expansion $\chi(t x) := \sum\limits_{n>0} t^n \chi_n(x)$ are given by
\begin{gather*}
	\chi(t x) = t x - \frac{t^2}{2} [\pi_+(x),x] + \frac{t^3}{4} [\pi_+([\pi_+(x),x]),x]			\\
\hphantom{\chi(t x) =}{}		+  \frac{t^3}{12} \big([\pi_+(x),[\pi_+(x),x]] - [[\pi_+(x),x],x]\big) + \cdots.
\end{gather*}
Fix $a_0 \in \mathfrak g$, and suppose that $\pi_-$ is an $R$-matrix. In~\cite{EGM} it was shown, that~\eqref{BCHrecursion1} implies in~$\mathcal{U}(\mathfrak g)[[t]]$ the factorization
\begin{gather}
\label{chiFact1}
	\exp(t a_0) = \exp (\pi_+(\chi(t a_0)) ) \exp (\pi_-(\chi(t a_0)) ).
\end{gather}
Note that
\begin{gather*}
	\exp(t a_0) =  \exp (-\pi_-(\chi(-t a_0)) )\exp (-\pi_+(\chi(-t a_0)) ).
\end{gather*}

If $\mathfrak g = \mathfrak g_+ \oplus \mathfrak g_-$, and if the corresponding projectors $\pi_\pm\colon \mathfrak g \to \mathfrak g_\pm$ satisfy~\eqref{mYBE}, then the above factorization is unique.

Recall that the binary product $x \triangleright y := - [\pi_+(x),y]$ def\/ines a post-Lie algebra structure on $\mathfrak g$, and
\begin{gather*}
	\llbracket x,y \rrbracket  = [\pi_-(x),y] + [x,\pi_-(y)] - [x,y] = x \tr y - y \tr x + [x,y]
\end{gather*}
is the double Lie bracket~\eqref{doublebracket}.

\begin{Remark}
In what follows we will work simultaneously with $\mathcal{U}(\mathfrak g)[[t]]$ and $\mathcal{U}_{*}(\mathfrak g)[[t]]$, which are the rings of formal power series with coef\/f\/icients in $\mathcal U(\mathfrak g)$ and $\mathcal U_{*}(\mathfrak g)$, respectively. Note that these two rings are endowed with a natural Hopf algebra structure, inherited from $\mathcal U(\mathfrak g)$ and $\mathcal U_{\ast}(\mathfrak g)$. Furthermore, note that the last statement in Theorem~\ref{thm:OudomGuin} implies that $\mathcal U_{*}(\mathfrak g)[[t]]$ is isomorphic, as a Hopf algebra, to $\mathcal U(\bar{\mathfrak g})[[t]]$.
\end{Remark}

From \eqref{chiFact1} and \eqref{eq:star-fact} we deduce the next proposition.

\begin{Proposition}
\label{prop:key}
For any $a \in \mathfrak g$, the following identity holds in  $\mathcal{U}(\mathfrak g)[[t]]$
\begin{gather}
\label{key1}
	\exp(- t a) = \exp^* (-\chi(t a) ).
\end{gather}
\end{Proposition}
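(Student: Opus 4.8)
The plan is to derive \eqref{key1} by combining the factorization \eqref{chiFact1} for the $\BCH$-recursion with the $*$-factorization \eqref{eq:star-fact} from Proposition~\ref{prop:star-fact}. First I would substitute $t \mapsto -t$ in \eqref{chiFact1}, or rather work directly with the companion identity recorded just after it, namely $\exp(t a_0) = \exp(-\pi_-(\chi(-t a_0)))\exp(-\pi_+(\chi(-ta_0)))$. Applying this with $a_0 := a$ and then replacing $t$ by $-t$ gives
\begin{gather*}
	\exp(-t a) = \exp\bigl(-\pi_-(\chi(t a))\bigr)\exp\bigl(-\pi_+(\chi(t a))\bigr)
\end{gather*}
in $\mathcal U(\mathfrak g)[[t]]$, where the right-hand side is a product in the \emph{concatenation} algebra.

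The second ingredient is Proposition~\ref{prop:star-fact}, which says that for any $v \in \mathfrak g$ one has $\exp^*(v) = \exp(v_-)\exp(v_+)$ with $v_\pm = \pi_\pm(v)$, again as an identity in $\mathcal U(\mathfrak g)$, the right-hand side being a concatenation product. I would apply this with $v := -\chi(t a)$, which is legitimate because, by the Remark following \eqref{BCHrecursion1}, $\chi(t a)$ lies in $\mathfrak g[[t]]$, so each coefficient is an element of $\mathfrak g$ and Proposition~\ref{prop:star-fact} applies coefficientwise in the $t$-adic topology. Since $\pi_\pm$ are linear, $(-\chi(ta))_\pm = -\pi_\pm(\chi(ta))$, so Proposition~\ref{prop:star-fact} yields exactly
\begin{gather*}
	\exp^*\bigl(-\chi(t a)\bigr) = \exp\bigl(-\pi_-(\chi(t a))\bigr)\exp\bigl(-\pi_+(\chi(t a))\bigr).
\end{gather*}
Comparing the two displays gives $\exp(-ta) = \exp^*(-\chi(ta))$, which is \eqref{key1}.

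The only genuine point to check is that passing to the ring of formal power series $\mathcal U(\mathfrak g)[[t]]$ (respectively $\mathcal U_*(\mathfrak g)[[t]]$) does not create any difficulty: the exponential series $\exp$, $\exp^*$ and the $\BCH$-recursion all make sense there because $\chi(ta)$ has no constant term, so all the series involved converge $t$-adically, and Propositions~\ref{prop:star-fact} and the factorization \eqref{chiFact1} — originally stated in $\mathcal U(\mathfrak g)$ or $\mathcal U(\mathfrak g)[[t]]$ — extend verbatim. I expect this bookkeeping, rather than any substantive computation, to be the main (and essentially only) obstacle; once the two factorizations are aligned with the correct sign of $t$ and the correct direction ($\pi_-$ before $\pi_+$, matching the ordering in \eqref{eq:star-fact}), the identity is immediate.
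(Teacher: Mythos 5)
Your argument is correct and is essentially the paper's own proof: both combine the factorization \eqref{chiFact1} (you use its companion form with $t\mapsto -t$, which amounts to inverting \eqref{chiFact1}) with Proposition~\ref{prop:star-fact} applied to $v=-\chi(ta)\in\mathfrak g[[t]]$, exactly as the paper does. Your extra remark on extending \eqref{eq:star-fact} to $\mathfrak g[[t]]$ is the same point the paper disposes of by ``considering $\chi(ta)\in\mathfrak g[[t]]$''.
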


\begin{proof}
Statement \eqref{key1} follows from~\eqref{eq:star-fact} by considering $\chi(ta) \in \mathfrak g[[t]]$
\begin{gather*}
	 \exp^* (-\chi(t a) ) =  \exp (-\pi_-(\chi(t a)) ) \exp (-\pi_+(\chi(t a)) ).
\end{gather*}
Then \eqref{chiFact1} implies that $\exp(- t a) = \exp^* (-\chi(t a) )$.
\end{proof}

From \eqref{key1} we see that $\exp(t a) = \exp^* (-\chi (- t a) )$ in  $\mathcal{U}(\mathfrak g)[[t]]$. From Proposition~\ref{prop:key} we get a~dif\/ferential equation for the $\BCH$-recursion~\eqref{BCHrecursion1} in~$\mathcal{U}(\mathfrak g)[[t]]$:
\begin{gather*}
	 {\rm{dexp}}^{*}_{\chi(t a)} (\dot \chi(t a) )
	  			 = \exp^* (\chi(t a) )* (\exp(-t a)a) \\
\hphantom{{\rm{dexp}}^{*}_{\chi(t a)} (\dot \chi(t a) )}{}
=  \exp^* (\chi(t a) )
				 	 (\exp^* (\chi(t a) ) \triangleright   (\exp(-t a)a) ) \\ 
\hphantom{{\rm{dexp}}^{*}_{\chi(t a)} (\dot \chi(t a) )}{}				 				
				   = \exp^* (\chi(t a) )
					 \big(
						(\exp^*(\chi(t a)) \triangleright   \exp(-t a))
						 (\exp^*(\chi(t a)) \triangleright   a)
					 \big)\\
\hphantom{{\rm{dexp}}^{*}_{\chi(t a)} (\dot \chi(t a) )}{}	
				 =	 \exp^*(\chi(t a))
					\big(
						 (\exp^*(\chi(t a)) \triangleright   \exp^*(-\chi(t a)))
				 		(\exp^*(\chi(t a)) \triangleright   a)
					\big)\\
\hphantom{{\rm{dexp}}^{*}_{\chi(t a)} (\dot \chi(t a) )}{}	
				  =
				  	\big( \exp^*(\chi(t a))
				 		(\exp^*(\chi (t a)) \triangleright
						\exp^*(-\chi(t a)))
					\big)
						 (\exp^*(\chi (t a)) \triangleright a)
									\\ 
\hphantom{{\rm{dexp}}^{*}_{\chi(t a)} (\dot \chi(t a) )}{}	
				 =
			 		 \big( \exp^*(\chi(t a)) *
					 \exp^*(-\chi(t a)) \big)
				 	(\exp^*(\chi(t a)) \triangleright   a) 	
				 = \exp^*(\chi(t a)) \triangleright a, 	
\end{gather*}
and therefore
\begin{gather}
\label{proof-key2}
	\dot \chi(ta) =  {\rm dexp}^{*-1}_{\chi(ta)}\big( \exp^*(\chi(t a)) \triangleright   a\big).
\end{gather}

\begin{Theorem}\label{thm:star-sol}
The solution of the differential equation
\begin{gather*}
	\dot{a}(t) = a(t) \tr a(t), \qquad a(0)=a_0
\end{gather*}
in  $\mathcal{U}(\mathfrak g)[[t]]$, is given by
\begin{gather*}
	a(t) :=\exp^* (\chi(a_0t) )\triangleright a_0.
\end{gather*}
\end{Theorem}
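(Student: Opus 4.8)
The plan is to trade the Lie-bracket flow $\dot a=a\tr a$ for a differential equation on the group-like element $g(t):=\exp^*(\chi(a_0t))\in\mathcal U_{*}(\mathfrak g)[[t]]$, for which $a(t)=g(t)\tr a_0$, and then to conclude via the identity $A\tr(B\tr C)=(A*B)\tr C$ recalled in Section~\ref{sect:LieEnveloping} (the one already used to derive~\eqref{proof-key2}). First I would observe that $\chi(a_0t)\in\mathfrak g[[t]]$ is primitive with vanishing constant term, so $g(t)$ is genuinely group-like in $\mathcal U_{*}(\mathfrak g)[[t]]$ and $g(0)=\un$; hence $a(0)=\un\tr a_0=a_0$, which disposes of the initial condition.

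The key intermediate step is to establish that
\begin{gather*}
	\dot g(t)=a(t)*g(t),
\end{gather*}
which I would do by differentiating the relation $g(t)*\exp(-ta_0)=\un$. Indeed, the $*$-inverse of $g(t)$ is $\exp^*(-\chi(a_0t))$, which by Proposition~\ref{prop:key} coincides with $\exp(-ta_0)$; so $g(t)*\exp(-ta_0)=\un$. Differentiating, using the Leibniz rule for the bilinear product $*$ and $\frac{d}{dt}\exp(-ta_0)=-\exp(-ta_0)a_0$, gives $\dot g(t)*\exp(-ta_0)=g(t)*\big(\exp(-ta_0)a_0\big)$, and the right-hand side is precisely the quantity evaluated in the chain of equalities preceding~\eqref{proof-key2}: since $g(t)$ is group-like and $\tr$ is a derivation for the concatenation product,
\begin{gather*}
	g(t)*\big(\exp(-ta_0)a_0\big)=\big(g(t)\,(g(t)\tr\exp(-ta_0))\big)(g(t)\tr a_0)\\
	=\big(g(t)*\exp(-ta_0)\big)(g(t)\tr a_0)=g(t)\tr a_0=a(t).
\end{gather*}
Multiplying $\dot g(t)*\exp(-ta_0)=a(t)$ on the right (in the $*$-product) by $g(t)$, the $*$-inverse of $\exp(-ta_0)$, yields $\dot g(t)=a(t)*g(t)$. (Equivalently, this is~\eqref{proof-key2} read as the statement that the right-trivialised tangent of $t\mapsto\exp^*(\chi(a_0t))$ equals $\exp^*(\chi(a_0t))\tr a_0=a(t)$; the direct derivation above has the merit of not committing to a ${\rm dexp}^{*}$ convention.)

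Granting the intermediate identity, the theorem follows at once: $\tr$ is bilinear and $a_0$ does not depend on $t$, so $\dot a(t)=\frac{d}{dt}\big(g(t)\tr a_0\big)=\dot g(t)\tr a_0$, whence
\begin{gather*}
	\dot a(t)=\big(a(t)*g(t)\big)\tr a_0=a(t)\tr\big(g(t)\tr a_0\big)=a(t)\tr a(t).
\end{gather*}
The step I expect to require the most care is the bookkeeping of left- versus right-trivialisation in the passage from~\eqref{proof-key2} to $\dot g(t)=a(t)*g(t)$; once that is pinned down (most transparently by differentiating $g(t)*\exp(-ta_0)=\un$ directly, as above), everything else — group-likeness of $g(t)$, the derivation property of $\tr$ over concatenation, the identity $(A*B)\tr C=A\tr(B\tr C)$, and the formal Leibniz rule in $\mathcal U(\mathfrak g)[[t]]$ — is already available from the preceding sections.
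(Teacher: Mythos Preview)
Your proposal is correct and follows essentially the same route as the paper. Both arguments reduce to establishing $\dot g(t)=a(t)*g(t)$ for $g(t)=\exp^*(\chi(a_0t))$ and then applying $(A*B)\tr C=A\tr(B\tr C)$; the paper phrases the first step via the ${\rm dexp}^*$ formalism and~\eqref{proof-key2}, while you obtain it by differentiating $g(t)*\exp(-ta_0)=\un$ directly --- which is precisely the computation the paper carried out just above~\eqref{proof-key2}. One small terminological slip: for group-like $g$ the map $g\tr(-)$ is multiplicative on concatenation, $g\tr(BC)=(g\tr B)(g\tr C)$, not a derivation; your computation uses exactly this multiplicativity, so the argument stands.
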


\begin{proof}
From \eqref{proof-key2} we f\/ind that
\begin{gather*}
	 \exp^* (\chi(a_0t) )\triangleright a_0 = {\rm{dexp}}^*_{\chi(a_0t)} (\dot {\chi}(a_0t) ) = a(t).
\end{gather*}
This yields
\begin{gather*}
	\dot{a}(t) 	 =  \big({\rm{dexp}}^*_{\chi(a_0t)}(\dot {\chi}(a_0t)) * \exp^*(\chi(a_0t) ) \big)\triangleright a_0\\
\hphantom{\dot{a}(t)}{}
			= {\rm{dexp}}^*_{\chi(a_0t)}(\dot {\chi}(a_0t)) \tr \big(\exp^*(\chi(a_0t)) \triangleright a_0\big)
			= a(t) \tr a(t).\tag*{\qed}
\end{gather*}
  \renewcommand{\qed}{}
\end{proof}

Recall Proposition \ref{prop:Ad-exp}. It implies for $a_0 \in \mathfrak{g}$ that
\begin{gather*}
	a(t) 	= \exp^*(\chi(a_0t))\triangleright a_0 						
		=  \exp(- \pi_+(\chi(a_0t))) a_0 \exp(\pi_+(\chi(a_0t))). 	
\end{gather*}
We deduce that $a(t) \in \mathfrak g[[t]]$. Therefore, assuming that $t$ is suf\/f\/iciently small and the $\BCH$-recursion convergent, such that $\exp^*(\chi(a_0t))$ lies in a neighborhood of the unit of the group~$G$ corresponding to the Lie algebra~$\mathfrak g$, then $a(t) = \exp^*(\chi(a_0t))\triangleright a_0$ solves the Lie bracket equation in~$\mathfrak g${\samepage
\begin{gather*}
	\dot{a}(t) = [a(t),p_+(t)],
\end{gather*}
with initial value $a(0)=a_0$, and $p_+(t)=\pi_+(a(t))$.}

Note that
\begin{gather*}
	{\rm{dexp}}^{*-1}_{x}(y)= \frac{{\rm{ad}}^*_x}{\exp{{\rm{ad}}^*_x} - 1}(y)
		= \sum_{n \ge 0} \frac{B_n}{n!} {\rm{ad}}^{*n}_x(y),
\end{gather*}
where $B_n$ are the Bernoulli numbers, and ${\rm ad}^{*n}_x(y):={{\rm ad}}^{*n-1}_x(\llbracket x,y \rrbracket)$, ${{\rm ad}}^{*0}_x(y):=y$. Hence, with $\chi(t a) := \sum\limits_{n>0} t^n \chi_n(a)$ it follows that
\begin{gather*}
	 {\rm{dexp}}^{*-1}_{\chi(a_0t)}:=\frac{{\rm{ad}}^*_{\chi(a_0t))}}{\exp{{\rm{ad}}^*_{\chi(a_0t)}} -1}
	  = \sum_{s \ge 0} \frac{B_s}{s!} {\rm{ad}}^{*s}_{\chi(a_0t)}\\
\hphantom{{\rm{dexp}}^{*-1}_{\chi(a_0t)}}{}
	  =  1+ \sum_{s > 0} t^s  \sum_{j=1}^{s} \frac{B_j}{j!}  \sum_{k_1 + \cdots +k_j=s \atop k_i>0}
	{\rm{ad}}^*_{\chi_{k_1}} \cdots {\rm{ad}}^*_{\chi_{k_j}}.
\end{gather*}
The post-Lie exponential $\exp^*\big(\chi(a_0t)\big)\triangleright a_0$ is def\/ined through
\begin{gather*}
	\exp^{\tr} (\chi(a_0t) )a_0
	:= a_0 + \sum_{i>0} \frac{1}{i!} \chi(a_0t) \tr  ( \chi(a_0t) \cdots \tr (\chi(a_0t) \tr a_0) )\\
\hphantom{\exp^{\tr} (\chi(a_0t) )a_0}{}
	=  a_0 + \sum_{i>0} \frac{t^i}{i!}  \sum_{u=1}^{i} \sum_{k_1 + \cdots +k_u=i \atop k_i>0}
			{\chi_{k_1}} \tr ({\chi_{k_2}} \tr \cdots ({\chi_{k_u}} \tr a_0)),
\end{gather*}
which yields for $\Omega_\tr(a_0t) :=  \sum\limits_{n>0} t^n \Omega_n(a_0) = \int_0^t  \dot {\chi}(a_0s)ds$
\begin{gather*}
	\Omega_1 = a_0, \qquad \Omega_2 = \frac{1}{2} a_0 \tr a_0,
\end{gather*}
and for $n>2$
\begin{gather*}
	 n\Omega_n = \sum_{j=1}^{n-1} \frac{1}{j!} \!\sum_{k_1 + \cdots +k_j=n-1 \atop k_i>0}\!
					{\Omega_{k_1}} \tr ({\Omega_{k_2}} \tr \cdots ({\Omega_{k_j}} \tr a_0) )
					+  \sum_{j=1}^{n-1} \frac{B_j}{j!} \!\sum_{k_1 + \cdots +k_j=n-1 \atop k_i>0}\!
					{\rm ad}^*_{\Omega_{k_1}} \cdots {\rm ad}^*_{\Omega_{k_j}}a_0 \\
\hphantom{n\Omega_n =}{}
			     + \sum_{j=2}^{n-1}\left( \left(  \sum_{s=1}^{j-1} \frac{B_s}{s!}
			    		\sum_{k_1 + \cdots +k_s=j-1 \atop k_i>0}
					{\rm ad}^*_{\Omega_{k_1}} \cdots {\rm ad}^*_{\Omega_{k_s}}\right)\right.\\
\left.\hphantom{n\Omega_n =}{}\times
					\left( \sum_{u=1}^{n-j} \frac{1}{u!} \sum_{k_1 + \cdots +k_u=n-j \atop k_i>0}
					{\Omega_{k_1}} \tr ({\Omega_{k_2}} \tr \cdots ({\Omega_{k_u}} \tr a_0) )\right)\right).
\end{gather*}

\begin{Remark}
This formula is equivalent to equation~(14) in~\cite{CasasIserles} (see also~\cite{Casas}), using $[\pi_+(x),\pi_+(y)]$ $= -\pi_+(\llbracket x,y \rrbracket )$, such that
\begin{gather*}
	\pi_+\big({\rm ad}^*_{\Omega_{k_1}} \cdots {\rm ad}^*_{\Omega_{k_u}}a_0\big) = (-1)^u {\rm ad}_{\Omega_{k_1}} \cdots {\rm ad}_{\Omega_{k_u}}\pi_+(a_0).
\end{gather*}
\end{Remark}

\begin{Theorem}
\label{thm:star-product}
Given $x \in \mathfrak g$, and $\xi \in \mathcal U(\mathfrak g)$. Then in $\mathcal U(\mathfrak g)[[t]]$
\begin{gather*}
	\exp(tx) * \xi = \exp (-\pi_- (\chi (-tx) )  ) \xi  \exp (-\pi_+ (\chi (-tx) ) ) .
\end{gather*}
\end{Theorem}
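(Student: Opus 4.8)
The plan is to reduce the claim about the $*$-product on $\mathcal{U}(\mathfrak{g})$ to the factorization already established for $\exp^*$. First I would use Proposition~\ref{prop:key} in the form $\exp(tx) = \exp^*(-\chi(-tx))$, which holds in $\mathcal{U}(\mathfrak{g})[[t]]$ since $\chi(-tx) \in \mathfrak g[[t]]$. This rewrites the left-hand side as $\exp^*(-\chi(-tx)) * \xi$. The key algebraic fact I would invoke is the unfolding of the $*$-product via the post-Lie action: by the definition~\eqref{def:preLie}, $A * B = A_{(1)}(A_{(2)} \tr B)$, and since $\exp^*(w)$ is group-like for $w \in \mathfrak g[[t]]$ with $\Delta(\exp^*(w)) = \exp^*(w) \otimes \exp^*(w)$, one gets $\exp^*(w) * \xi = \exp^*(w)\,(\exp^*(w) \tr \xi)$.

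Next I would compute $\exp^*(w) \tr \xi$ for $w = -\chi(-tx)$. Here I need the extension of Proposition~\ref{prop:Ad-exp} from $\xi \in \mathfrak g$ to arbitrary $\xi \in \mathcal{U}(\mathfrak g)$: since $\tr$ acts on $\mathcal{U}(\mathfrak g)$ by the rule $A \tr (B \tr C) = (A_{(1)}(A_{(2)} \tr B)) \tr C$ and each $\exp^*(w)$ acts as $\exp^{\tr}(w)$, the operator $\xi \mapsto \exp^*(w) \tr \xi$ is the algebra automorphism of $\mathcal{U}(\mathfrak g)$ determined on generators by $v \mapsto \exp(-\pi_+(w))\, v\, \exp(\pi_+(w))$, i.e., it is conjugation by $\exp(\pi_+(w))$. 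Hence $\exp^*(w) \tr \xi = \exp(-\pi_+(w))\, \xi\, \exp(\pi_+(w))$ for all $\xi \in \mathcal{U}(\mathfrak g)$. Combining with the group-like splitting and Proposition~\ref{prop:star-fact}, $\exp^*(w) = \exp(\pi_-(w))\exp(\pi_+(w))$, I obtain
\begin{gather*}
	\exp^*(w) * \xi = \exp(\pi_-(w))\exp(\pi_+(w)) \cdot \exp(-\pi_+(w))\,\xi\,\exp(\pi_+(w))
		= \exp(\pi_-(w))\,\xi\,\exp(\pi_+(w)).
\end{gather*}
Substituting $w = -\chi(-tx)$ gives precisely $\exp(-\pi_-(\chi(-tx)))\,\xi\,\exp(-\pi_+(\chi(-tx)))$, which is the desired identity.

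The main obstacle I expect is the careful justification that conjugation formula extends from $\mathfrak g$ to all of $\mathcal{U}(\mathfrak g)$. Proposition~\ref{prop:Ad-exp} is stated only for $w \in \mathfrak g$ and $\xi \in \mathfrak g$, whereas here $w = -\chi(-tx)$ is merely an element of $\mathfrak g[[t]]$ (a formal series of degree-one elements, not a single Lie algebra element) and $\xi$ is an arbitrary element of $\mathcal{U}(\mathfrak g)$. To handle $\xi \in \mathcal{U}(\mathfrak g)$ one argues that $\xi \mapsto \exp^*(w) \tr \xi$ is an algebra morphism — this follows from the identity $A \tr (BC) = (A_{(1)} \tr B)(A_{(2)} \tr C)$ for group-like $A$, which is part of the post-Lie-to-$\mathcal{U}(\mathfrak g)$ machinery of~\cite{EFLMK} — so it suffices to check it on generators, where Proposition~\ref{prop:Ad-exp} applies term-by-term in the $t$-grading (each coefficient $\chi_n(x) \in \mathfrak g$, so the proposition applies to finite truncations and one passes to the limit in $\mathcal{U}(\mathfrak g)[[t]]$). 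Everything else is bookkeeping with group-like elements and the two established factorizations.
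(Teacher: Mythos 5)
Your proposal is correct and follows essentially the same route as the paper: rewrite $\exp(tx)*\xi$ via the group-like property and Proposition~\ref{prop:key}, identify $\exp^*(-\chi(-tx))\tr\xi$ with conjugation by $\exp(-\pi_+(\chi(-tx)))$, and cancel against the factorization $\exp(tx)=\exp(-\pi_-(\chi(-tx)))\exp(-\pi_+(\chi(-tx)))$. The only cosmetic difference is in extending the conjugation formula from $\mathfrak g$ to all of $\mathcal U(\mathfrak g)$: you invoke the multiplicativity $A\tr(BC)=(A_{(1)}\tr B)(A_{(2)}\tr C)$ to argue on generators, while the paper checks it directly on monomials $\xi=a^n$ using the derivation property of $\chi(-tx)\tr$ — the same fact in infinitesimal form.
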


\begin{proof}
First we calculate $\exp(tx) * \xi $ for $\xi \in \mathfrak g$. We f\/ind
\begin{gather*}
	\exp(tx) * \xi =  \exp(tx)(\exp(tx) \triangleright   \xi )
			= \exp(tx)(\exp^*(-\chi (- tx)) \triangleright   \xi),
\end{gather*}
where we used \eqref{key1}. Then
\begin{gather*}
	\exp^*(-\chi(- tx)) \triangleright   \xi
		= \exp^{\triangleright  }(-\chi (- tx))\xi
		= \exp(\pi_+(\chi (-tx))) \xi \exp(- \pi_+(\chi (-tx)) ).
\end{gather*}
Therefore
\begin{gather*}
 \exp(tx) * \xi =  \exp(tx) (\exp^* (-\chi (- tx) ) \triangleright   \xi )\\
 \hphantom{\exp(tx) * \xi }{}
			 =  \exp (-\pi_-(\chi (-tx))) \exp(-\pi_+(\chi (-tx)) )
					\exp(\pi_+(\chi (-tx)) ) \xi \exp(- \pi_+(\chi (-tx)) )\\
\hphantom{\exp(tx) * \xi }{}
			= \exp(-\pi_-(\chi (-tx))) \xi \exp(- \pi_+(\chi (-tx)) ) .
\end{gather*}

Now we suppose that $\xi \in \mathcal U(\mathfrak g)$. For notational simplicity we assume that $\xi=a^n$, $a \in \mathfrak g$.
\begin{gather*}
	\exp^*(-\chi (- tx)) \triangleright   a^n
		= \exp^{\triangleright  }(\chi (-t x)) a^n\\
\hphantom{\exp^*(-\chi (- tx)) \triangleright   a^n}{}
		= a^n- \chi (- tx) \triangleright  a^n
				+ \frac{1}{2!}  \chi (-t x) \tr (\chi (- tx) \triangleright a^n ) + \cdots.
\end{gather*}
Recall that for $\chi (- tx) \in \mathfrak g[[t]]$
\begin{gather*}
	\chi(- tx) \triangleright  a^n = \sum_{i=1}^n a^{i-1}(\chi (- tx) \triangleright  a)a^{n-i}
	= - \sum_{i=1}^n a^i[\pi_+(\chi(- tx)) , a]a^{n-i} \\
\hphantom{\chi(- tx) \triangleright  a^n}{}
	=  - [\pi_+(\chi(- tx)) , a^n].		
\end{gather*}
The last Lie bracket belongs to $\mathcal U(\mathfrak g)[[t]]$. This yields
\begin{gather*}
		\chi(- tx) \triangleright  a^n= \exp(\pi_+(\chi(-tx)) ) a^n \exp(- \pi_+(\chi(-tx)) ),
\end{gather*}
which implies the result.
\end{proof}

\begin{Theorem}
\label{thm:group-star}
For every $x,y\in\mathfrak g$,
\begin{gather}
	\exp(tx)*\exp(y)= \exp (- \pi_- (\chi(-tx)) ) \exp(y) \exp(- \pi_+(\chi(-tx)) ),\label{eq:prod}
\end{gather}
and
\begin{gather*}
	(\exp(tx))^{*-1} =   \exp(\pi_-(\chi(-tx)) ) \exp(\pi_+(\chi(-tx)) ). 
\end{gather*}
\end{Theorem}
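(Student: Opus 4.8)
The plan is to obtain both identities essentially for free from results already established, so almost no new computation is needed.

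\medskip

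\textbf{First identity.} Equation~\eqref{eq:prod} is nothing but the special case $\xi=\exp(y)$ of Theorem~\ref{thm:star-product}. Since $\exp(y)$ lies in (the completion of) $\mathcal U(\mathfrak g)$, substituting it for $\xi$ in $\exp(tx)*\xi=\exp(-\pi_-(\chi(-tx)))\,\xi\,\exp(-\pi_+(\chi(-tx)))$ yields~\eqref{eq:prod} verbatim; nothing further is required.

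\medskip

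\textbf{Second identity.} Here I would first observe that $\exp(tx)=\un+tx+\cdots$ has invertible (namely $\un$) constant term, hence is a unit in the formal power series ring $\mathcal U_*(\mathfrak g)[[t]]$ with respect to $*$; consequently any one-sided $*$-inverse is automatically the two-sided $*$-inverse. It therefore suffices to check that $\xi:=\exp(\pi_-(\chi(-tx)))\exp(\pi_+(\chi(-tx)))$ is a right $*$-inverse. Applying Theorem~\ref{thm:star-product} to this $\xi\in\mathcal U(\mathfrak g)$ gives
\[
	\exp(tx)*\xi=\exp\bigl(-\pi_-(\chi(-tx))\bigr)\,\exp\bigl(\pi_-(\chi(-tx))\bigr)\exp\bigl(\pi_+(\chi(-tx))\bigr)\,\exp\bigl(-\pi_+(\chi(-tx))\bigr)=\un ,
\]
which proves the formula. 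An alternative, equally short route: by Proposition~\ref{prop:key} (in the form $\exp(tx)=\exp^*(-\chi(-tx))$ recorded just after~\eqref{key1}), the element $\exp(tx)$ is $\exp^*$ of the Lie series $-\chi(-tx)\in\mathfrak g[[t]]$, so its $*$-inverse is $\exp^*(\chi(-tx))$, and then the factorization~\eqref{eq:star-fact} of Proposition~\ref{prop:star-fact} applied with $v:=\chi(-tx)$ (so that $v_\pm=\pi_\pm(\chi(-tx))$) splits this into $\exp(\pi_-(\chi(-tx)))\exp(\pi_+(\chi(-tx)))$.

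\medskip

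\textbf{Main obstacle.} There is essentially none: both parts are immediate corollaries of Theorem~\ref{thm:star-product} together with the factorization of $\exp^*$ supplied by Propositions~\ref{prop:key} and~\ref{prop:star-fact}. The only points that call for attention are the sign and index bookkeeping — the argument is $\chi(-tx)$ rather than $\chi(tx)$, because $\exp(tx)=\exp^*(-\chi(-tx))$, and the projector $\pi_-$ sits on the left factor while $\pi_+$ sits on the right — and the standing convention (see the footnote opening Section~\ref{sect:PostLieBracketFlow} and items~\eqref{rmk:2} and~\eqref{rmk:3} of Remark~\ref{rmk:rem}) that everything is read inside the formal power series completions, which is precisely what guarantees $\chi(-tx)\in\mathfrak g[[t]]$ and makes the exponentials $\exp^*$, $\exp$ and Propositions~\ref{prop:key}, \ref{prop:star-fact} available in this setting.
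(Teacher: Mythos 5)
Your argument is correct, but for the first identity it takes a genuinely different route from the paper. You obtain \eqref{eq:prod} by simply specializing Theorem \ref{thm:star-product} to $\xi=\exp(y)$, which is legitimate under the standing completion conventions of Remark \ref{rmk:rem} (the theorem's proof reduces to monomials and extends coefficientwise to exponential series), and it is arguably the shortest path since that theorem immediately precedes the statement. The paper instead gives an independent Hopf-algebraic argument: it applies the map $F$ of Definition \ref{def:HF} to the product of the group-like elements $\exp^{\cdot}(-\chi(-tx))\cdot\exp^{\cdot}(-\chi(-y))$ in $\mathcal U(\bar{\mathfrak g})[[t]]$ (the printed proof has some $x$/$y$ typos), evaluating $F$ in two ways: once through its algebra-morphism property into $\mathcal U_*(\mathfrak g)$ (Proposition \ref{prop:isoal}), which combined with \eqref{key1} gives $\exp(tx)*\exp(y)$, and once directly from formula \eqref{Hopf-fact1} using group-likeness, which produces the four-factor expression collapsing to the right-hand side of \eqref{eq:prod}. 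What the paper's route buys is a further illustration of how the isomorphism $F$ transports factorizations from $\mathcal U(\bar{\mathfrak g})$ to $\mathcal U_*(\mathfrak g)$, in the spirit of Remark \ref{rem:im}, without re-invoking the $\tr$-computation underlying Theorem \ref{thm:star-product}; what yours buys is brevity. For the second identity the paper's proof text is silent, and both of your arguments are valid: the right-$*$-inverse check via Theorem \ref{thm:star-product} (with the correct observation that invertibility of $\exp(tx)$ in $\mathcal U_*(\mathfrak g)[[t]]$ makes a one-sided inverse the inverse), and the more direct $\exp(tx)=\exp^*(-\chi(-tx))$ from Proposition \ref{prop:key} followed by the factorization \eqref{eq:star-fact} of Proposition \ref{prop:star-fact} applied to $v=\chi(-tx)$, the latter being the natural completion of the authors' own argument.
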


\begin{proof}
Recall that we write $\exp v$, $\exp^* v$ and $\exp^{\cdot}v$ to denote the exponentials of~$v$ in $\mathcal U(\mathfrak g)$, $\mathcal U_*(\mathfrak g)$ and $\mathcal U(\bar{\mathfrak g})$, respectively.

Identity \eqref{eq:prod} follows from~\eqref{key1}, and applying the map~$F$ to
\begin{gather*}
\exp^\cdot  (-\chi(- tx) )\cdot \exp^\cdot  (-\chi(- tx) ) \in \mathcal U(\bar{\mathfrak{g}})[[t]].
\end{gather*}
The morphism property of $F$ yields
\begin{gather*}
 F\big(\exp^\cdot  (-\chi(- tx) )\cdot \exp^\cdot  (-\chi(- tx) )\big)\\
 \qquad{} =  \exp^* (-\chi(-t x) )* \exp^*  (-\chi(- tx) ) =   \exp(tx)*\exp(y),
\end{gather*}
and
\begin{gather*}		
 F\big(\exp^\cdot (-\chi(- tx))\cdot \exp^\cdot (-\chi(-t x))\big)\\
\qquad{} = \exp (-\pi_-(\chi(- tx)))  \exp (-\pi_-(\chi(- ty))) \exp (-\pi_+(\chi(- ty))) \exp (-\pi_+(\chi(- tx)))\\
\qquad{} = \exp (-\pi_-(\chi(-t x))) \exp(y) \exp (-\pi_+(\chi(- tx))).\tag*{\qed}
\end{gather*}
\renewcommand{\qed}{}
\end{proof}

\section{Conclusions}
\label{sect:conclusion}

 We have addressed the problem of f\/inding explicit solutions to isospectral f\/low equations. These form a class of dif\/ferential equations usually encountered in the theory of (Hamiltonian) dyna\-mical systems, and commonly studied using methods borrowed from Lie theory. In general, their solutions are obtained starting from the existence of a solution of the modif\/ied classical Yang--Baxter equation. In other words, from the existence of a classical $R$-matrix on the underlying Lie algebra.

 We have outlined how to approach the problem using a framework based on a particular class of non-associative algebras known as post-Lie algebras. More precisely, starting from a classical $R$-matrix solving the modif\/ied classical Yang--Baxter equation on a (f\/inite-dimensional) Lie algebra $\mathfrak g$, we showed how the corresponding post-Lie algebra structure on the Lie enveloping algebra $\mathcal U(\mathfrak g)$ allows us to describe explicit solutions of exponential type to any isospectral f\/low equation def\/ined on the original Lie algebra~$\mathfrak g$.

\subsection*{Acknowledgements}
We acknowledge support from~{ICMAT} and the Severo Ochoa Excelencia Program, as well as the NILS-Abel project 010-ABEL-CM-2014A, and the SPIRIT project RCN:231632 of the Norwegian Research Council. The f\/irst author is supported by a Ram\'on y Cajal research grant from the Spanish government, and acknowledges support from the Spanish government under project MTM2013-46553-C3-2-P.

\pdfbookmark[1]{References}{ref}
\LastPageEnding

\end{document}